\theoremstyle{plain}
\newtheorem{theorem}{Theorem}[section]		
\newtheorem{lemma}[theorem]{Lemma}
\newtheorem{claim}[theorem]{Claim}
\newtheorem{proposition}[theorem]{Proposition}
\newtheorem{corollary}[theorem]{Corollary}
\theoremstyle{remark}
\newtheorem*{remark}{Remark}
\def\N{\mathbb{N}}
\def\FF{\mathcal{F}}
\def\HH{\mathcal{H}}
\def\E{\mathbb{E}}
\renewcommand{\P}{\mathbb{P}}
\DeclareMathOperator{\Ind}{Ind}
\newcommand{\eps}{\ensuremath{\varepsilon}}
\let\emptyset\varnothing
\newcommand{\dD}{d}
\newcommand{\hide}[1]{}
\let\originalleft\left
\let\originalright\right
\renewcommand{\left}{\mathopen{}\mathclose\bgroup\originalleft}
\renewcommand{\right}{\aftergroup\egroup\originalright}
\newcommand{\alex}[1]{{\color{Purple} AS: #1}}
\def\imod#1{\allowbreak\mkern10mu({\operator@font mod}\,\,#1)}
\author{Xiying Du}
\address[Du]{School of Mathematics, Georgia Institute of Technology, Skiles Building, Cherry Street, Atlanta, Georgia, USA.}
\email{xdu90@gatech.edu}
\author{Ant\'onio Gir\~ao}
\address[Gir\~ao]{Mathematical Institute, University of Oxford, Andrew Wiles Building, Radcliffe Observatory Quarter, Woodstock Road, Oxford, UK.}
\email{girao@maths.ox.ac.uk}
\author{Zach Hunter}
\address[Hunter]{Mathematical Institute, University of Oxford, Andrew Wiles Building, Radcliffe Observatory Quarter, Woodstock Road, Oxford, UK.}
\email{zachary.hunter@exeter.ox.ac.uk}
\author{Rose McCarty}
\address[McCarty]{Department of Mathematics, Princeton University, Fine Hall, Washington Road, Princeton, New Jersey, USA.}
\email{rm1850@princeton.edu}
\author{Alex Scott}
\address[Scott]{Mathematical Institute, University of Oxford, Andrew Wiles Building, Radcliffe Observatory Quarter, Woodstock Road, Oxford, UK.}
\email{scott@maths.ox.ac.uk}
\date{\today} 
\begin{document}
\title{Induced $C_4$-free subgraphs with large average degree}

\thanks{AS and AG were supported by EPSRC grant EP/V007327/1. RM was supported by NSF grant DMS-2202961.}
\begin{abstract}
We prove that there exists a constant $C$ so that, for all $s,k \in \mathbb{N}$, if $G$ has average degree at least $k^{Cs^3}$ and does not contain $K_{s,s}$ as a subgraph then it contains an induced subgraph which is $C_4$-free and has average degree at least $k$. It was known that some function of $s$ and $k$ suffices, but this is the first explicit bound. We give several applications of this result, including short and streamlined proofs of the following two corollaries.

We show that there exists a constant $C$ so that, for all $s,k \in \mathbb{N}$, if $G$ has average degree at least $k^{Cs^3}$ and does not contain $K_{s,s}$ as a subgraph then it contains an induced subdivision of $K_k$. This is the first quantitative improvement on a well-known theorem of K\"uhn and Osthus; their proof gives a bound that is triply exponential in both $k$ and $s$.

We also show that for any hereditary degree-bounded class $\mathcal{F}$, there exists a constant $C=C_\mathcal{F}$ so that $C^{s^3}$ is a degree-bounding function for $\mathcal{F}$. This is the first bound of any type on the rate of growth of such functions. It is open whether there is always a polynomial degree-bounding function.
\end{abstract}
\maketitle

\section{Introduction}

A longstanding conjecture from $1983$ due to Thomassen~\cite{Thomassen} states that for all $g,k \geq 2$, there exists $f(g,k)$ such that every graph $G$ with average degree at least $f(g,k)$ contains a subgraph with girth at least $g$ and average degree at least $k$. It is a standard exercise to show that every graph has a bipartite subgraph with at least half of its edges. So the first nontrivial open case is when $g=5$ and we wish to find a $C_4$-free subgraph in a bipartite graph $G$. (A graph is \textit{$H$-free}, for some graph $H$, if it has no subgraph isomorphic to $H$.) This case of $g=5$ was resolved in a remarkable paper by K\"uhn and Osthus~\cite{KO1} in $2004$. However, since then no further progress has been made, and the conjecture remains wide open for all $g\geq 7$. 

A straightforward probabilistic argument shows that Thomassen's conjecture holds for every ``almost-regular'' graph $G$. Therefore a natural strategy is to pass to an almost-regular subgraph of $G$ which preserves some of its average degree. Unfortunately, this strategy is bound to fail: Pyber, R\"odl, and Szemer\'edi~\cite{PRS} proved that there exist $n$-vertex graphs with average degree $\Omega \left(\log\log n\right)$ which do not contain a $k$-regular subgraph for any $k\geq 3$. We remark that, very recently, Janzer and Sudakov~\cite{JS} proved via an ingenious argument that $\Omega \left(\log\log n\right)$ is indeed the correct barrier. (Formally, they proved that for each $k$ there exists a constant $c_k$ so that every $n$-vertex graph with average degree at least $c_k \log\log n$ has a $k$-regular subgraph.) This fully resolved the Erd\H{o}s-Sauer problem from~\cite{ErdosSauer}.

Since K\"uhn and Osthus~\cite{KO1} first resolved the case of $g=6$, two additional proofs of their theorem have been discovered. The first proof, which is due to Dellamonica, Koubek, Martin, and R\"odl~\cite{DelRodl}, uses a surprising result of F\"uredi~\cite{Furedi} on hypergraphs. The second proof, which is due to Montgomery, Pokrovskiy, and Sudakov~\cite{MPS}, is more recent and gives the best bounds currently known. In particular, they proved that there exists a constant $C$ so that for all $k\in \mathbb{N}$, every graph with average degree at least $k^{Ck^2}$ contains a subgraph which is $C_4$-free and has average degree at least $k$. The same paper also gives a lower bound of $k^{3-o(1)}$, and it is a very interesting problem to determine whether a polynomial in $k$ suffices. 

In $2021$, the fourth author of this paper proved that it is possible to ask for a much stronger property \cite{McCarty}: for all $s,k \in \mathbb{N}$, there exists an integer $f(s,k)$ so that every $K_{s,s}$-free bipartite graph of average degree at least $f(s,k)$ contains an \textit{induced} $C_4$-free subgraph of average degree at least $k$. The paper does not determine an explicit bound; however, with some unwrapping, we believe that the proof yields a function which is triply exponential in $s$, for fixed $k$. Moreover, the assumption about bipartiteness can be removed, at the cost of another exponent, using a theorem of Kwan, Letzter, Sudakov, and Tran~\cite{KLST}. Their theorem says that for all $s,k \in \mathbb{N}$, there exists an integer $g(s,k)$ so that every $K_s$-free graph with average degree at least $g(s,k)$ contains an \textit{induced} bipartite subgraph with average degree at least $k$. (More precisely, they proved that there exists a constant $C$ so that the function $g(s,k) = \exp \left( C^{s \log{s}} \cdot k \log{k}\right)$ suffices.)

In our main theorem, we prove that this stronger induced theorem for $K_{s,s}$-free graphs holds with bounds that essentially match the singly exponential bounds of~\cite{MPS} from the non-induced setting. 

\begin{theorem}\label{thm:main}
There exists a constant $C$ so that for all $s,k \in \mathbb{N}$, every $K_{s,s}$-free graph with average degree at least $k^{Cs^3}$ contains an induced subgraph which is $C_4$-free and has average degree at least $k$.
\end{theorem}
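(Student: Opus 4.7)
My plan is to combine the Montgomery--Pokrovskiy--Sudakov (MPS) bound on non-induced $C_4$-free subgraphs with a sparsification argument that exploits the $K_{s,s}$-free hypothesis to control codegrees. I would begin by reducing to a regularised bipartite subgraph: taking a bipartite subgraph costs a factor of $2$ in average degree, and a standard dyadic-pigeonhole regularisation allows one to assume $G$ is bipartite on $(A,B)$ with all degrees in $[d_0,2d_0]$ for some $d_0 \geq k^{C' s^3}$.

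In this regularised bipartite setting, the obstruction to $G[A \cup B]$ being induced $C_4$-free is the set of \emph{codegree-heavy pairs}, namely pairs of vertices on the same side of the bipartition with common neighbourhood of size at least $2$. The $K_{s,s}$-free hypothesis controls such pairs globally: a standard double-counting argument (essentially K\H{o}v\'ari--S\'os--Tur\'an) shows that the number of pairs $u,v$ on the same side with $|N(u)\cap N(v)| \geq t$ is at most roughly $(s-1)|A|^2/t^{s-1}$ for $t \geq s-1$. Consequently, if one keeps each vertex independently with probability $p$, the expected number of surviving induced $C_4$'s is much smaller than the number of surviving edges, while the surviving average degree is $\Theta(p\,d_0)$. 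Choosing $p \approx d_0^{-c/s}$ should make induced $C_4$'s sparse enough to delete one endpoint from each, yielding an induced $C_4$-free subgraph with average degree $\Theta(d_0^{1-c/s})$.

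The plan is then to interleave this sparsification with the MPS-style density argument, so that at each of $O(s)$ steps one reduces the average degree from $d$ to roughly $d^{1-c/s}$ while simultaneously removing a polynomial fraction of the surviving codegree-heavy pairs. After $O(s)$ iterations the resulting induced subgraph is $C_4$-free, and tracking the cumulative loss across these iterations, combined with the MPS exponent $Ck^2$ for the non-induced step, accounts for the final exponent of order $s^3$.

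\textbf{Main obstacle.} I expect the hardest part to be orchestrating the iteration so that the final exponent stays at $O(s^3)$ rather than $O(s^4)$: a naive ``regularise, then apply MPS, then sparsify'' appears to lose an extra factor of $s$, so the sparsification must be performed concurrently with the MPS construction, not afterwards. A secondary difficulty is handling codegree-heavy pairs on both sides of the bipartition at once, and ensuring that the K\H{o}v\'ari--S\'os--Tur\'an-type codegree bound, which uses $K_{s,s}$-freeness in $G$, continues to apply inside the subgraph produced by MPS; this will likely require running the codegree accounting relative to $G$ throughout, and only at the end passing to the induced subgraph of $G$ on the surviving vertex set.
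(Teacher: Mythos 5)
Your outline hinges on a ``standard dyadic-pigeonhole regularisation'' bringing you to a bipartite graph with all degrees in $[d_0,2d_0]$ at only polynomial cost in average degree, but that step is exactly where the argument breaks. When a small set of extremely high-degree vertices carries a constant fraction of the edges (say $|B|$ vertices of degree up to $n$, with $n/|B|$ superpolynomial in $d$), the pigeonhole on degree scales costs a factor on the order of $(\log n)^2$, which can exceed $d$ and wipe out the average degree entirely. Since the problem is about \emph{induced} subgraphs you also cannot first pass to a non-induced bipartite subgraph ``for free,'' nor can you later delete individual offending edges as in the MPS argument --- only vertex deletions are available. The paper's Lemma~\ref{deletion} is essentially your sparsification-plus-K\H{o}v\'ari--S\'os--Tur\'an step and it needs no MPS machinery at all: once one has an almost-regular induced subgraph, a single round of random vertex sampling already yields an induced $C_4$-free subgraph with average degree $d^{\Omega(1/s)}$. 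The genuine difficulty, which your plan never reaches, is the \emph{lopsided} case isolated in Lemma~\ref{make extreme}: a partition $V=A\cup B$ with $|A|\geq 2^{d^{\delta}}|B|$ and $e(G[A,B])\gtrsim nd$.

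In that lopsided regime codegree-sparsification cannot substitute for the missing idea: after keeping each vertex independently with probability $p$, the ratio $|A|/|B|$ is essentially unchanged, the $B$-side codegrees remain enormous, and the induced $C_4$'s do not thin out faster than the edges. The paper instead encodes the neighbourhoods $N(a)\subseteq B$ as an $r$-uniform hypergraph on $B$, applies a F\"uredi-type kernel theorem (Theorem~\ref{kernels}) to pass to a structured $r$-partite sub-hypergraph, and introduces a new extremal function $F(\ell,k)$ for ``induced pairs'' in $\ell$-bounded covered hypergraphs (Lemma~\ref{hyper upper bound}) to extract an induced $C_4$-free subgraph with average degree $k$ directly (Proposition~\ref{model lopsided}). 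That combinatorial step, not the iteration bookkeeping you flag as your main obstacle, is what makes the theorem work, and it is absent from your proposal.
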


First, we note that the assumption of $K_{s,s}$-freeness is essential, as any $C_4$-free induced subgraph of $K_{s,s}$ has average degree less than~$2$ (thus by taking $G= K_{s,s}$ for larger and larger $s$, one could make the average degree of $G$ arbitrarily large without obtaining induced $C_4$-free subgraphs with average degree at least~$2$). Moreover, we remark that if a graph $G$ contains a $K_{4k^2,4k^2}$-subgraph, then it contains a bipartite $C_4$-free subgraph with average degree at least $k$; see~\cite[Lemma~2.2]{MPS}. (In fact, we can just take this subgraph to be the incidence graph of the projective plane of order $q$, where $q$ is the smallest prime which is at least $k-1$.) Therefore we obtain the following as a simple corollary of Theorem~\ref{thm:main}.

\begin{corollary}
\label{cor:MPS}
There exists a constant $C$ so that for all $k \in \N$, every graph with average degree at least $k^{Ck^6}$ contains a bipartite $C_4$-free subgraph with average degree at least $k$.
\end{corollary}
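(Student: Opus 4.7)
The plan is to deduce Corollary \ref{cor:MPS} from Theorem \ref{thm:main} by a simple case split on whether $G$ contains a large complete bipartite subgraph, calibrated so that one case is handled by the Montgomery--Pokrovskiy--Sudakov construction (Lemma~2.2 of \cite{MPS}) and the other by Theorem \ref{thm:main}.

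Concretely, let $C_0$ be the constant from Theorem \ref{thm:main}, and set $s := 4k^2$. The strategy is: either $G$ contains $K_{s,s}$ as a subgraph, in which case the cited lemma of~\cite{MPS} already yields a bipartite $C_4$-free subgraph of average degree at least $k$ (via the incidence graph of a suitable projective plane); or $G$ is $K_{s,s}$-free, in which case Theorem~\ref{thm:main} applies with parameter $s=4k^2$. In the latter case, I would apply Theorem~\ref{thm:main} not with target average degree $k$ but with target $2k$, so that after taking a bipartite subgraph containing at least half of the edges (a standard folklore fact), the resulting bipartite graph is still $C_4$-free (as an induced subgraph of an induced $C_4$-free graph) and has average degree at least $k$.

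It remains only to verify that the hypothesis $\Deg(G) \geq k^{Ck^6}$ of the corollary is strong enough to invoke Theorem~\ref{thm:main} in Case~2. With $s=4k^2$ and target $2k$, Theorem~\ref{thm:main} requires average degree at least
\[
(2k)^{C_0 s^3} \;=\; (2k)^{C_0 (4k^2)^3} \;=\; (2k)^{64 C_0 k^6}.
\]
Since $(2k)^{64 C_0 k^6} \leq k^{C k^6}$ for a suitably large constant $C$ (absorbing the factor $2^{64 C_0 k^6}$ using $2 \leq k^{O(1)}$ once $k\geq 2$, and handling $k=1$ trivially), the hypothesis of the corollary suffices.

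There is no real obstacle here: the content lies entirely in Theorem~\ref{thm:main}, and the only bookkeeping is to pick the threshold $s=4k^2$ at which the two regimes (dense in a $K_{s,s}$ versus $K_{s,s}$-free) meet optimally, and to lose a factor of $2$ to pass from an induced $C_4$-free subgraph to a bipartite $C_4$-free subgraph. The exponent $k^6$ in the corollary is then forced by the $s^3$ in Theorem~\ref{thm:main} together with $s = \Theta(k^2)$.
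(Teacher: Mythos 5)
Your proof is correct and takes essentially the same route as the paper: a case split on whether $G$ contains $K_{4k^2,4k^2}$, handling the dense case via Lemma~2.2 of~\cite{MPS} and the $K_{4k^2,4k^2}$-free case via Theorem~\ref{thm:main}, with the exponent $k^6$ coming from $s^3$ at $s=\Theta(k^2)$. You are somewhat more explicit than the paper about the passage from an induced $C_4$-free subgraph to a \emph{bipartite} $C_4$-free one (applying the theorem with target $2k$ and then taking a bipartite subgraph with at least half the edges), a small but genuine detail the paper leaves implicit.
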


\noindent Corollary~\ref{cor:MPS} nearly recovers the result of \cite{MPS}, only with the exponent `$k^2$' replaced by `$k^6$'. 

We also prove some corresponding lower bounds for Theorem~\ref{thm:main}. So, let $f_{\Ind}(s,k)$ denote the least integer $D$ such that if $G$ is a $K_{s,s}$-free graph with average degree at least $D$, then $G$ contains an \textit{induced} $C_4$-free subgraph with average degree at least $k$. Theorem~\ref{thm:main} establishes that $f_{\Ind}(s,k)\le k^{Cs^3}$ for some absolute constant $C$. By considering random graphs, one can obtain some lower bounds for this quantity as well. Specifically, in Section~\ref{sec:lower} we prove the following. 

\begin{theorem}\label{thm:lower} The following bounds hold.
    \begin{enumerate}[label=(\alph*)]
        \item\label{diagonal case} There exists a constant $c>0$ so that for all $k\ge 2$, we have that $f_{\Ind}(k,k)\ge k^{ck}$.
        \item\label{fixed k case} For each fixed $k\ge 2$, we have that $f_{\Ind}(s,k)\ge s^{(1/4-o(1))(k^2-3k-2)}$.
        \item\label{fixed s case} For each fixed $s\ge 2$, we have that $f_{\Ind}(s,k)\ge k^{(1/2-o(1))s-1}$.
    \end{enumerate}
\end{theorem}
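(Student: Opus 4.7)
The plan is to prove all three lower bounds via the probabilistic method, analyzing the binomial random graph $G(n,p)$ with parameters tuned to each part. First, I would record the structural reduction: by the Reiman--K\H{o}v\'ari--S\'os--Tur\'an bound $\mathrm{ex}(m, C_4) \leq \tfrac{m}{4}(1 + \sqrt{4m-3})$, any induced $C_4$-free subgraph with average degree $\geq k$ must have at least $m_0 := k^2 - k + 1$ vertices (combine $|E(H)| \geq km/2$ with the extremal bound to force $\sqrt{4m-3} \geq 2k-1$). Hence it suffices to produce a $K_{s,s}$-free graph $G$ such that $G[S]$ contains a $C_4$ for every $S \subseteq V(G)$ with $|S| \geq m_0$; by monotonicity (a $C_4$ in $G[S]$ persists in $G[S']$ for any $S' \supseteq S$), it is enough to check this for $|S| = m_0$.

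Next, I would analyze $G(n,p)$. For a fixed $S$ of size $m_0$, Janson's inequality applied to the $3\binom{m_0}{4}$ potential $C_4$'s on $S$ gives
\[
\Pr\bigl[G[S] \text{ is } C_4\text{-free}\bigr] \leq \exp\bigl(-\Omega(\min(\mu, \mu^2/\Delta))\bigr),
\]
where $\mu = 3\binom{m_0}{4}p^4 \asymp k^8 p^4$ and the pair-overlap term is $\Delta \asymp k^{12}p^7$. Union bounding over $\binom{n}{m_0}$ subsets yields the desired $C_4$-containment property with high probability, provided $p$ lies above a threshold depending on $\log n$ and $k$. In parallel, $\mathbb{E}[\#K_{s,s}] \leq n^{2s}p^{s^2}/(s!)^2$; arranging this to be at most $n/2$, we delete one vertex per $K_{s,s}$-copy to extract a $K_{s,s}$-free induced subgraph on $\geq n/2$ vertices whose average degree is still $\Omega(np)$ by Chernoff concentration, and which retains the $m_0$-subset property (since vertex deletion preserves inducedness).

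Finally, I would optimize $(n,p)$ for each part. For (a), setting $s=k$, $p \asymp k^{-3/2}$, and $n = k^{\Theta(k)}$ produces average degree $np = k^{\Theta(k)}$. For (c), with $s$ fixed and $k$ large: I would take $p = 1/k$ and $n = k^{s/2}$, so that $\mathbb{E}[\#K_{s,s}] = O(1/(s!)^2)$ and the Janson bound $\Pr[G[S] \text{ is } C_4\text{-free}] \leq e^{-\Omega(k^3)}$ dominates the union-bound factor $\binom{n}{k^2} = e^{O(sk^2\log k)}$ for fixed $s$ and large $k$, yielding $np \asymp k^{s/2 - 1}$. For (b), with $k$ fixed and $s$ large, the analogous optimization is more delicate and is tuned to produce an exponent of the form $\tfrac14(k^2-3k-2)$ in $s$. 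The hard part will be managing the tension between the two constraints --- a smaller $p$ eases $K_{s,s}$-freeness but weakens the lower bound on the number of $C_4$'s in a random $m_0$-subset, breaking the union bound, while a larger $p$ strengthens the $C_4$-step but forces too many $K_{s,s}$-copies to remove by vertex deletion --- which I expect to require the full Janson inequality rather than a crude second-moment argument, especially in regime (b) where $p$ must be carefully calibrated as a function of $s$ and $k$ simultaneously.
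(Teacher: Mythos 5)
Your overall framework --- random graph $G(n,p)$, a first-moment bound on $K_{s,s}$ copies, and a union bound showing that every small vertex subset contains a $C_4$ --- matches the paper's Theorem~\ref{lower bound conditions}. But there are substantive problems with the execution.

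\textbf{Part (a): the parameter $p\asymp k^{-3/2}$ does not work.} With $n=k^{\Theta(k)}$ and $m_0\asymp k^2$, the union bound must beat $\binom{n}{m_0}\ge (n/m_0)^{m_0}=\exp\bigl(\Theta(k^3\log k)\bigr)$. Your own Janson estimate gives, with $p=k^{-3/2}$, $\mu\asymp k^8p^4=k^2$ and $\mu^2/\Delta\asymp k^{5/2}$, hence $\Pr[G[S]\text{ is }C_4\text{-free}]\ge\exp(-O(k^{5/2}))$. (Indeed even $\Pr[G[S]\text{ is empty}]=(1-p)^{\binom{m_0}{2}}\ge\exp(-\Theta(k^{5/2}))$, so $G$ almost surely has huge independent sets of size $m_0$, and $X>0$ with high probability.) Since $k^{5/2}\ll k^3\log k$, the first moment diverges and the argument collapses. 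The paper instead takes $p=k^{-1/5}$, $n=k^{k/20}$, for which the (elementary, Janson-free) bound $q(K,p)\le\exp(-p^4\binom{\lfloor K/2\rfloor}{2})=\exp(-\Theta(k^{16/5}))$ does beat $n^K=\exp(\Theta(k^3\log k))$ since $16/5>3$. Relatedly, with $p=k^{-1/5}$ the $K_{k,k}$ expectation is already $\ll 1$, so the paper needs no ``delete one vertex per copy'' step; ensuring $\mathbb{E}[Y]<1/2$ suffices, which is cleaner and avoids the extra concentration argument you invoke.

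\textbf{Janson vs.~the paper's elementary bound.} The paper's Proposition~\ref{qbound} is just the independence of $\binom{\lfloor K/2\rfloor}{2}$ disjoint ``pair-of-pairs'' slots, giving $q(K,p)\le(1-p^4)^{\binom{\lfloor K/2\rfloor}{2}}$. In regime (a) (and (b), where $p\to1$) this disjoint-slot bound is essentially as strong as Janson and far simpler; your appeal to Janson is heavier machinery with no payoff. Your Janson calibration for part (c) with $p=1/k$, $n=k^{s/2}$ does check out numerically ($\mu\asymp k^4$, $\mu^2/\Delta\asymp k^3$ beats $\binom{n}{m_0}=\exp(O(sk^2\log k))$ for fixed $s$), so that part is a genuinely different route from the paper. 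The paper instead observes (Corollary~\ref{independent set in C4free}) that a $C_4$-free graph of average degree $\ge k$ contains an independent set of size $\gtrsim k$, and then kills all independent $(k-4)$-sets; this drops the subset size from $\Theta(k^2)$ to $\Theta(k)$ and makes the union bound trivial without Janson.

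\textbf{Part (b) is not actually proved.} You acknowledge the regime $k$ fixed, $s\to\infty$ is ``delicate'' but give no parameters. The paper uses $p=1-k^2\log s/s$ (so $1-p^4\asymp 4k^2\log s/s$ is small) and $n=s^{(1/4-o(1))(k^2-3k-2)}$, with the same disjoint-slot bound; this is an essential part of the theorem and must be supplied.

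\textbf{Minor.} Your threshold $m_0=k^2-k+1$ differs from the paper's $K=k^2-3k$ (chosen even and $<(k-1)^2$); both derive from Reiman, and the discrepancy is unimportant, but it does shift the exponent in (b).
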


\subsection{Further applications} 
Now we discuss two additional corollaries of Theorem~\ref{thm:main} which are motivated by $\chi$-boundedness. A family of graphs $\mathcal{F}$ is \textit{hereditary} if for every $G\in \mathcal{F}$, every induced subgraph of $G$ is also in $\mathcal{F}$. A hereditary family of graphs $\mathcal{F}$ is \textit{$\chi$-bounded} if there exists a function $f: \mathbb{N} \rightarrow \mathbb{N}$ such that for every $G\in \mathcal{F}$, we have $\chi(G) \leq f(\omega(G))$, where we write $\chi(G)$ for the chromatic number of $G$ and $\omega(G)$ for the clique number of $G$. 
These classes have been widely studied, following an influential paper of Gy\'arf\'as~\cite{G87} raised a number of well-known conjectures (many of these have now been solved: see, for example,  \cite{CSS, inducedx}; and \cite{SSSurvey} for a survey). 

A similar notion to $\chi$-boundedness, but for average degree instead of chromatic number, has recently been receiving attention. Intuitively, a class is $\chi$-bounded if large chromatic number forces the existence of large cliques; and it is ``degree-bounded'' if large average degree forces large balanced bicliques. Formally, we say that a hereditary family $\mathcal{F}$ is \textit{degree-bounded} if there exists a function $g: \mathbb{N} \rightarrow \mathbb{N}$ such that for every $G\in \mathcal{F}$, we have $d(G) \leq g(\tau(G))$, where we write $d(G)$ for the average degree of $G$ and $\tau(G)$ for the \textit{biclique number} of $G$, which is the largest integer $s$ such that $G$ contains a (not necessarily induced) copy of $K_{s,s}$. Any such function $g$ is called a \textit{degree-bounding function} for the class. 

Strengthening an unpublished theorem of Hajnal and R\"{o}dl (see~\cite{GST80}), Kierstead and Penrice~\cite{KP} showed that for every tree $T$, the class of graphs without an induced copy of $T$ is degree-bounded. Recently, Scott, Seymour and Spirkl~\cite{SSS} established a quantitative strengthening of the result from \cite{KP}, by showing that the class of $T$-induced-free graphs is polynomially degree bounded (that is, the function $g$ can be a polynomial in $s$, for fixed $T$). This improved upon another recent theorem of Bonamy, Bousquet, Pilipczuk, Rz\k{a}\.zewski,
Thomass\'e and Walczak \cite{bonamy}, who proved the same result when $T$ is a path. 

A rephrasing of Theorem~\ref{thm:main} immediately gives the following corollary, which says that every hereditary degree-bounded class is essentially exponentially bounded. 

\begin{corollary}\label{cor:boundedness}
For any hereditary degree-bounded class of graphs $\mathcal{F}$, there exists a constant $C=C_{\mathcal{F}}$ so that $C^{s^3}$ is a degree-bounding function for $\mathcal{F}$.
\end{corollary}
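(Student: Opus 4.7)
The plan is to derive this as an essentially immediate consequence of Theorem~\ref{thm:main}, by using both the hereditary property and the existence of some unspecified degree-bounding function $g$ for $\mathcal{F}$ to set up a contradiction. The key observation is that any $C_4$-free graph with at least one edge has biclique number exactly $1$, since it contains $K_{1,1}$ but not $K_{2,2}$; so if we could produce an induced $C_4$-free subgraph of sufficiently large average degree inside a member of $\mathcal{F}$, we would violate the degree bound $g$.

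Concretely, I would let $g$ be any degree-bounding function for $\mathcal{F}$, let $C'$ denote the constant from Theorem~\ref{thm:main}, and set $k := \lceil g(1)\rceil + 1$, a constant depending only on $\mathcal{F}$. Given $G \in \mathcal{F}$ with biclique number $\tau(G) = s$, the graph $G$ is $K_{s+1,s+1}$-free. If $d(G) \ge k^{C'(s+1)^3}$, then Theorem~\ref{thm:main} (applied with parameters $s+1$ and $k$) yields an induced $C_4$-free subgraph $G' \subseteq G$ with $d(G') \ge k$. Since $\mathcal{F}$ is hereditary, $G' \in \mathcal{F}$; and since $d(G') \ge k \ge 2$ while $G'$ is $C_4$-free, we have $\tau(G') = 1$. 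The degree-bounding hypothesis then forces $d(G') \le g(1) < k$, a contradiction. Hence $d(G) < k^{C'(s+1)^3}$.

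To repackage this bound in the claimed form $C_{\mathcal{F}}^{s^3}$, I would use $(s+1)^3 \le 8s^3$ for $s \ge 1$ to obtain $d(G) \le (k^{8C'})^{s^3}$, and absorb the edge case $s = 0$ (where $G$ is edgeless so $d(G) = 0$) by taking $C_{\mathcal{F}} := \max\{k^{8C'},\,1\}$. There is no real obstacle beyond this bookkeeping: the content of the corollary is entirely that Theorem~\ref{thm:main} is strong enough to convert the qualitative hypothesis ``some degree-bounding function exists'' into a quantitative bound whose exponent grows only polynomially in $s$, and the logical skeleton is simply that the induced $C_4$-free subgraph promised by Theorem~\ref{thm:main} is itself a member of $\mathcal{F}$ with $\tau \le 1$.
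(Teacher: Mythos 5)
Your proof is correct and is exactly the argument the paper has in mind: the paper states that the corollary follows from "a rephrasing of Theorem~\ref{thm:main}" and gives no further details, and your contrapositive argument (set $k := \lceil g(1)\rceil + 1$, apply Theorem~\ref{thm:main} with parameter $\tau(G)+1$, and use heredity together with the fact that a $C_4$-free graph with an edge has biclique number exactly $1$) is precisely that rephrasing. The bookkeeping via $(s+1)^3 \le 8s^3$ and the separate treatment of $\tau(G)=0$ are both fine.
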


This corollary is surprising since the analogous statement for $\chi$-boundedness is false: Bria\'nski, Davies, and Walczak~\cite{non-polychi} recently constructed hereditary $\chi$-bounded classes where the optimal $\chi$-bounding function grows arbitrarily quickly. This disproved the popular conjecture of Esperet~\cite{esperet2017habilitation} that every hereditary $\chi$-bounded class is polynomially $\chi$-bounded. It could still be true, however, that every hereditary degree-bounded class is polynomially degree-bounded. We note that, informally, Theorem~\ref{thm:main} also tells us that the value of $g(1)$ can be used to ``efficiently control'' the rate of growth of $g$. Again the analogous statement for $\chi$-boundedness is very false: Carbonero, Hompe, Moore, and Spirkl~\cite{CHMS23} recently proved that there exist graphs of arbitrarily large chromatic number where every triangle-free induced subgraph has chromatic number at most~$4$ (for more general results see \cite{inducedinduced}).

Our final corollary obtains improved bounds for classes with a forbidden induced subdivision. A \textit{subdivision} of a graph $H$ is any graph which can be obtained from $H$ by replacing the edges $uv$ of $H$ by internally-disjoint paths between $u$ and $v$. A beautiful result of K\"uhn and Osthus~\cite{KO1} states that for every graph $H$, the class of graphs which do not contain an induced subdivision of $H$ is degree-bounded. 

\begin{theorem}[K\"uhn and Osthus]\label{kotheorem}
    For every graph $H$ and integer $s$, there is an integer $p(s,H)$ such that every $K_{s,s}$-free graph $G$ with average degree at least $p(s,H)$ contains an induced subdivision of $H$. 
\end{theorem}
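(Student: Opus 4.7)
The plan is to deduce Theorem~\ref{kotheorem} from Theorem~\ref{thm:main}, exploiting the fact that $C_4$-free graphs are structurally well-suited for finding induced subdivisions. Let $H$ be a graph on $h$ vertices and let $s \in \mathbb{N}$. I would set $k = k(h)$ to be a sufficiently large polynomial in $h$ and define $p(s, H) = k^{Cs^3}$, where $C$ is the constant from Theorem~\ref{thm:main}. First, I would apply Theorem~\ref{thm:main} to obtain an induced subgraph $G'$ of $G$ which is $C_4$-free and has average degree at least $k$; then pass to an induced subgraph $G''$ of $G'$ of minimum degree at least $k/2$ by the standard argument of iteratively deleting low-degree vertices. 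Since $G''$ is induced in $G'$, it is still $C_4$-free, and the remaining task is to locate an induced subdivision of $H$ inside this high-minimum-degree $C_4$-free graph $G''$.

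To build the induced subdivision of $H$, I would proceed greedily: choose branch vertices $b_1, \ldots, b_h$ in $G''$, and then, for each edge $uv \in E(H)$, find a short path $P_{uv}$ between the corresponding branch vertices. At each intermediate stage we maintain a partial subdivision $F$ and extend it by a single path $P$ satisfying two conditions: $(i)$ $P$ is internally disjoint from the current $F$; and $(ii)$ no internal vertex of $P$ has a neighbor in $F$ other than the one dictated by the target subdivision of $H$. The key enabling fact is that in a $C_4$-free graph of minimum degree at least $k/2$, any two vertices have at most one common neighbor, so a BFS from any vertex produces balls that grow with tree-like rate $\Omega\bigl((k/2)^r\bigr)$ up to any fixed radius $r$. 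Running such a BFS from one endpoint of the desired path $P$, while avoiding the forbidden vertices, produces a connection of length bounded by an absolute constant, provided $k$ is large enough compared to $h$.

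The main obstacle is enforcing condition $(ii)$: avoiding unwanted chords between $P$ and the existing partial subdivision $F$. Here $C_4$-freeness is crucial, since for each vertex $x \in V(F)$, any candidate vertex $y$ in the BFS shares at most one common neighbor with $x$, so the set of vertices ``forbidden at distance $1$ from $x$'' in the next BFS layer is small and easy to account for. Consequently, at each BFS step the number of excluded candidate vertices is at most a polynomial in $|V(F)|$, which remains polynomial in $h$ throughout the $|E(H)|$ iterations of the construction. A careful accounting then shows that $k$ polynomial in $h$ suffices for every path-extension step to succeed, producing an induced subdivision of $H$ inside $G''$, and hence inside $G$.
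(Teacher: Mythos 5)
Your proposal has a genuine gap. The enabling claim---that in a $C_4$-free graph of minimum degree $k/2$, BFS balls ``grow with tree-like rate $\Omega((k/2)^r)$ up to any fixed radius $r$''---is false. The incidence graph of a projective plane of order $q$ is $C_4$-free and $(q+1)$-regular but has only $2(q^2+q+1)$ vertices, so the ball of radius $3$ is the whole graph and the growth stops well short of $(q+1)^3$. Conversely, a $C_4$-free graph with huge minimum degree can have arbitrarily large diameter (e.g.\ many copies of such an extremal graph glued sparsely along a path), so there is no absolute constant bounding the length of a connecting path. More fundamentally, even where BFS spreads well, greedily routing $\binom{h}{2}$ internally disjoint (and chord-free) paths between branch vertices is not a local BFS argument at all---it is precisely the content of the Bollob\'as--Thomason / Koml\'os--Szemer\'edi subdivision theorem, and your sketch gives no mechanism for ensuring the paths stay internally disjoint across all $|E(H)|$ iterations when the ambient graph could be a tight bottleneck.

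The paper's proof of the quantitative version (Corollary~\ref{induced subdivision}) sidesteps this entirely. It applies Theorem~\ref{thm:technical} to get a $\{C_3,C_4\}$-free induced subgraph $G'$ that is either bipartite or nearly regular, then uses a random sparsification to extract two independent sets $U,W$ with $|U|\gg k^2|W|$ such that every $u\in U$ has \emph{exactly two} neighbours in $W$. The auxiliary graph $J$ on vertex set $W$, with an edge $N(u)\cap W$ for each $u\in U$, has distinct edges (by $C_4$-freeness) and hence average degree $\gg k^2$, so Theorem~\ref{noninduced subdivision} gives a non-induced $K_k$-subdivision in $J$; the two-neighbour structure of $U$ and the independence of $U,W$ force the corresponding subdivision in $G$ to be induced for free. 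The point is that the hard ``disjoint paths'' work is delegated entirely to the Bollob\'as--Thomason black box, and the auxiliary construction does the work of upgrading ``subdivision'' to ``induced subdivision''. Your sketch misses this reduction and instead tries to re-derive the subdivision theorem from scratch, which it cannot do.
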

Their bounds for $p(s,H)$ are roughly triply exponential in $s$, for fixed $H$. A conjecture raised by Bonamy et al.~\cite[Conjecture~33]{bonamy} claims that actually $p(s,H)$ could be taken to be a polynomial in $s$. In Section~\ref{finding subdivisions}, we give a short and streamlined proof that a single exponential is enough.

\begin{restatable}{corollary}{indSub}
\label{induced subdivision}
    There exists a constant $C$ so that for all $k,s \in \N$, every $K_{s,s}$-free graph $G$ with average degree at least $k^{Cs^3}$ contains an induced subdivision of $K_k$. 
\end{restatable}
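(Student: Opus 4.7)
The plan is to reduce the problem to finding an induced subdivision of $K_k$ inside a $C_4$-free graph, where the needed quantitative bounds are polynomial in $k$.

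First, apply Theorem~\ref{thm:main} with $k$ replaced by an appropriate polynomial $d = d(k)$ (chosen large enough that step two below works), taking the constant $C$ in Corollary~\ref{induced subdivision} correspondingly large. This produces an induced subgraph $H \subseteq G$ which is $C_4$-free and has average degree at least $d$. Any induced subdivision of $K_k$ inside $H$ is automatically an induced subdivision of $K_k$ in $G$, so it suffices to find such a subdivision inside~$H$.

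For the second step, first pass to a subgraph $H' \subseteq H$ with $\delta(H') \ge d/2$ by iteratively discarding low-degree vertices; note that $H'$ is still $C_4$-free. Then construct the subdivision greedily: choose $k$ branch vertices $v_1, \dots, v_k$ whose neighborhoods are essentially pairwise disjoint---the $C_4$-free property already forces $|N(v_i) \cap N(v_j)| \le 1$ for any two distinct vertices, so this is routine once $d$ is polynomial in $k$. Then for each pair $(i,j)$, build a short internally disjoint path from $v_i$ to $v_j$ through previously unused vertices, rejecting any choice that would create a chord on this path or an extra edge to some other partial path outside of branch vertices.

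The main obstacle is the bookkeeping in this greedy construction: simultaneously controlling (a) pairwise disjointness of the paths, (b) the induced condition for each individual path, and (c) the absence of cross-edges between distinct paths except at branch vertices. None of these is hard on its own, but handling them together requires arranging matters so that at each stage only polynomially many vertex choices are forbidden. The $C_4$-free condition is what keeps cross-edges under control, since any two vertices share at most one common neighbor and so the number of bad configurations to avoid stays small. Combined with the lower bound $\delta(H') \ge d/2$, a suitable polynomial choice of $d$ will suffice, and the resulting exponent is absorbed into the constant $C$ in the final bound $k^{Cs^3}$.
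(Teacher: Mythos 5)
Your proposal has two substantive gaps, one in what it invokes and one in the argument itself.

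First, you apply Theorem~\ref{thm:main}, which produces a $C_4$-free induced subgraph $H$ of large average degree but gives no control whatsoever over $\Delta(H)$. Your greedy construction needs that control: when you extend a path, the ``forbidden'' vertices include everything adjacent to any already-used vertex (branch vertices and internal vertices of completed paths), and there are roughly $k^{O(1)} \cdot \Delta(H)$ of these. If $\Delta(H)$ dwarfs $d(H)$ --- which Theorem~\ref{thm:main} permits --- this can exceed $\delta(H')$ and the greedy stalls. The paper uses the technical strengthening Theorem~\ref{thm:technical} precisely to split into two manageable cases: either $G'$ is bipartite, or $d(G') \ge \Delta(G')^{1-1/100}$, so that either the two sides can be handled separately or the maximum degree is under control.

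Second, and more fundamentally, the greedy path-building argument is not where the paper's proof lives, and you have not filled in the claimed ``bookkeeping.'' Maintaining simultaneously (a) internal disjointness, (b) inducedness of each path, and (c) no cross-edges between distinct paths or to the wrong branch vertices, is exactly the hard part; $C_4$-freeness limits common neighbours to one per pair but does nothing to stop a single vertex from being adjacent to many branch vertices or to internal vertices of several paths. The paper avoids this entirely via a reduction: it extracts an independent set $W$ and a set $U$ of vertices that each have \emph{exactly two} neighbours inside $W$ (using a random sparsification that exploits degree control or bipartiteness), builds an auxiliary graph $J$ on $W$ whose edges come from $U$ (distinct by $C_4$-freeness), and applies the Bollob\'as--Thomason / Koml\'os--Szemer\'edi theorem to find a \emph{non-induced} subdivision of $K_k$ in $J$. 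Because every subdividing vertex in $U$ has degree exactly $2$ into $W$ and $U,W$ are independent, the corresponding subgraph of $G$ is automatically an \emph{induced} subdivision. This ``find a non-induced subdivision in an auxiliary graph, then lift it'' step is the key idea; without it, or without something genuinely replacing it, your sketch does not close.
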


 Our proof will use Theorem~\ref{thm:technical} (which is a technical strengthening of our main theorem, Theorem~\ref{thm:main}) and a theorem about (non-induced) subdivisions which was proved independently by Bollob\'as and Thomason~\cite{bollobas} and K\'omlos and Szemer\'edi~\cite{komlos}.

The rest of this paper is organized as follows: after giving some notation in the next section, we prove some preliminary lemmas in sections \ref{sec:prelems} and \ref{hyperproblem}.  Theorem \ref{thm:main} is proved in Section \ref{sec:mainproof}, Theorem \ref{thm:lower} in Section \ref{sec:lower}, and Corollary \ref{induced subdivision} in Section \ref{finding subdivisions}.  We conclude with some discussion in section \ref{sec:conclude}.

\section{Notation}

We mostly use standard notation, and we consider all graphs to be finite, simple, and loopless. Let $G$ be a graph. We denote the vertex set of $G$ by $V(G)$ and the edge set of $G$ by $E(G)$. We write $|G| \coloneqq |V(G)|$ and $e(G)\coloneqq |E(G)|$. We write $d(G)$ for the average degree of $G$ (so that $d(G)= 2e(G)/|G|$), $\delta(G)$ for the minimum degree of $G$, and $\Delta(G)$ for the maximum degree of $G$. Given a vertex $v$ of $G$, we write $d_G(v)$ for the degree of $v$ and $N_G(v)$ for the neighbourhood of $v$.

As usual, we say that a graph $H$ is an \textit{induced subgraph} of $G$ if there is an injective map $f:V(H) \rightarrow V(G)$ so that $f(xy)\in E(G)$ if and only if $xy\in E(H)$. We say that $G$ is \textit{$d$-degenerate} if every non-empty subgraph $G'$ of $G$ contains a vertex of degree at most $d$. We say that $G$ is \textit{$H$-free} if $G$ does not contain $H$ as a subgraph. We say that $G$ is \textit{$H$-induced-free} if $G$ does not contain $H$ as an induced subgraph. 

Finally, given two disjoint sets of vertices $A,B\subset V(G)$, we write $G[A,B]$ for the induced bipartite graph with parts $A$ and $B$. That is, the vertex set of $G[A,B]$ is $A \cup B$, and the edge set of $G[A,B]$ is $\{xy \in E(G): x \in A, y \in B\}$. We view bipartite graphs as being equipped with a fixed bipartition; typically we denote a bipartite graph $G$ with sides $A$ and $B$ and edge set $E$ by $G=(A,B,E)$.

\section{Preliminary lemmas}\label{sec:prelems}
Recall from the introduction that a straightforward probabilistic argument shows that Thomassen's Conjecture holds for graphs which are almost regular; this is done by including edges independently at random and deleting one edge from each short cycle. It turns out that we can similarly use the K\H{o}v\'ari-S\'os-Tur\'an Theorem~\cite{kovari} to prove Theorem~\ref{thm:main} for graphs which are almost regular, by including vertices independently at random and deleting one vertex from each short cycle. We will take care of this step in Lemma~\ref{deletion}.


For now, we recall the classical result of K\H{o}v\'ari, S\'os, and Tur\'an. (We note that this can be formulated either for graphs or for bipartite graphs.  There is a standard reduction to the bipartite case: for a $K_{s,s}$-free graph $G$, we create a new graph with two vertices $u_1$ and $u_2$ for each vertex $u$ of $G$, where $u_i$ and $v_j$ are adjacent in the new graph if $i \neq j$ and $uv$ is an edge of $G$.)


\begin{theorem}[{\cite{kovari}}]\label{edges in Kssfree}
For all $s\ge 2$ and $n\ge  1$, every $K_{s,s}$-free graph on $n$ vertices has $O((s-1)^{1/s}n^{2-1/s}+sn ))$ edges. Thus, there exists an absolute constant $C$ such that for all $s \ge 2$ and $n\ge s^2$, every $K_{s,s}$-free graph on $n$ vertices has at most $Cn^{2-1/s}$ edges.
\end{theorem}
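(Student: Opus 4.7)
The plan is to use the classical double-counting argument (of Kővári–Sós–Turán) that counts copies of $K_{1,s}$ in the graph in two different ways. For concreteness I would work directly in the non-bipartite setting (or reduce to the bipartite setting using the doubling trick described in the paper, which changes the constants only by bounded factors). Let $G$ be $K_{s,s}$-free on $n$ vertices with edge set of size $e$, and let $d_v$ denote the degree of a vertex $v$. I count ordered pairs $(v,S)$ where $v \in V(G)$ and $S \subseteq N_G(v)$ with $|S|=s$.

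On the one hand, this count equals $\sum_{v \in V(G)} \binom{d_v}{s}$. On the other hand, fixing an $s$-subset $S \subseteq V(G)$, the number of common neighbours of $S$ must be at most $s-1$; otherwise, those common neighbours together with $S$ would contain a $K_{s,s}$, contradicting the hypothesis. Hence the count is at most $(s-1)\binom{n}{s}$, yielding
\[
\sum_{v \in V(G)} \binom{d_v}{s} \;\le\; (s-1)\binom{n}{s}.
\]

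Now I would apply convexity. The function $x \mapsto \binom{x}{s}$ (defined via the falling factorial) is convex on $[s-1,\infty)$, and for $x<s-1$ we may simply bound $\binom{x}{s}\ge 0$. Partition $V(G)$ into $V_{\text{low}} = \{v : d_v < s\}$ and $V_{\text{high}} = V(G)\setminus V_{\text{low}}$; the vertices in $V_{\text{low}}$ contribute at most $s|V_{\text{low}}|\le sn$ to $2e$, so we may focus on bounding the edges incident to $V_{\text{high}}$. Jensen's inequality gives
\[
|V_{\text{high}}| \binom{\bar d}{s} \;\le\; \sum_{v \in V_{\text{high}}} \binom{d_v}{s} \;\le\; (s-1)\binom{n}{s},
\]
where $\bar d$ is the average degree over $V_{\text{high}}$. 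Using $\binom{\bar d}{s}\ge (\bar d - s + 1)^s/s!$ and $\binom{n}{s}\le n^s/s!$, this rearranges to $\bar d \le (s-1)^{1/s} n^{1-1/s} + s - 1$, so the total number of edges incident to $V_{\text{high}}$ is at most $\tfrac12 n\bigl((s-1)^{1/s}n^{1-1/s} + s - 1\bigr)$. Adding the trivial $sn$ contribution from $V_{\text{low}}$ gives the first claimed bound $e(G) = O((s-1)^{1/s} n^{2-1/s} + sn)$.

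Finally, to obtain the cleaner form valid for $n\ge s^2$, I would observe that in this regime $(s-1)^{1/s} n^{2-1/s} \ge s n^{2-1/s}/n^{1/s} \cdot n^{1/s} = \Omega(sn)$ (since $n^{1-1/s}\ge n^{1/2}\ge s$ when $n\ge s^2$), so the linear term $sn$ is absorbed into the main term up to a multiplicative constant, yielding $e(G) \le C n^{2-1/s}$ for some absolute constant $C$. The only subtlety in the whole argument is the careful treatment of the convexity step for small-degree vertices, which is handled by the $V_{\text{low}}/V_{\text{high}}$ split described above; everything else is a direct computation.
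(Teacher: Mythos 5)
The paper states this as a classical theorem of K\H{o}v\'ari, S\'os, and Tur\'an and supplies no proof of its own (only a remark about the standard reduction to the bipartite setting), so there is no in-paper argument to compare against. Your proof is the standard double-counting argument: count pairs $(v,S)$ with $S$ an $s$-subset of $N(v)$ once by vertices (giving $\sum_v\binom{d_v}{s}$) and once by $s$-sets (giving at most $(s-1)\binom{n}{s}$, since an $s$-set with $s$ or more common neighbours yields a $K_{s,s}$), then apply convexity after handling low-degree vertices. This is correct and is exactly the classical argument.

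One small slip in the convexity step: from $|V_{\text{high}}|\binom{\bar d}{s}\le(s-1)\binom{n}{s}$, the estimates $\binom{\bar d}{s}\ge(\bar d-s+1)^s/s!$ and $\binom{n}{s}\le n^s/s!$ give $\bar d-s+1\le(s-1)^{1/s}\,n/|V_{\text{high}}|^{1/s}$, not the stated $\bar d-s+1\le(s-1)^{1/s}\,n^{1-1/s}$; these coincide only when $|V_{\text{high}}|=n$. However the next step multiplies by $|V_{\text{high}}|$, and since
\[
|V_{\text{high}}|\,\bar d\;\le\;(s-1)^{1/s}\,n\,|V_{\text{high}}|^{1-1/s}+(s-1)\,|V_{\text{high}}|\;\le\;(s-1)^{1/s}\,n^{2-1/s}+(s-1)\,n,
\]
the edge bound $e(G)=O\bigl((s-1)^{1/s}n^{2-1/s}+sn\bigr)$ survives. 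The absorption of the $sn$ term for $n\ge s^2$, using $n^{1-1/s}\ge n^{1/2}\ge s$ and $(s-1)^{1/s}=\Theta(1)$, is also fine.
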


Since we will be able to take care of the nearly regular case easily, it is helpful to have configurations which guarantee a nearly-regular induced subgraph of large average degree. Our first lemma says that it is enough to find a bipartite graph of large average degree where each side \textit{individually} is nearly regular (but the average degrees of the two sides can be quite different from each other). This lemma is due to Janzer and Sudakov~{\cite[Lemma~3.7]{JS}}, but we restate the proof since we will need to state a slightly stronger conclusion. We say that a bipartite graph $\Gamma = (A,B,E)$ is {\em $L$-almost-biregular} if $d_\Gamma(a)\le L|E|/|A|$ for all $a\in A$,  and $d_\Gamma(b)\le L|E|/|B|$ for all $b\in B$. Thus, informally, each vertex in $A$ has degree at most a factor of $L$ larger than the average degree of all vertices in $A$, and likewise for $B$. 

\begin{lemma}\label{biregular to regular} Each $L$-almost-biregular graph $\Gamma$ has an induced subgraph $\Gamma'$ with $d(\Gamma')\ge d(\Gamma)/4$ and $\Delta(\Gamma')\le 24Ld(\Gamma')$.
\end{lemma}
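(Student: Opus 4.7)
We may assume $|A|\le|B|$. Set $d_A=|E|/|A|$ and $d_B=|E|/|B|$, so that $d_A\ge d_B$ and $d_B\le d(\Gamma)\le 2d_B$. Since the $L$-almost-biregularity gives $\Delta(\Gamma)\le Ld_A=Ld_B\cdot|B|/|A|$, one has $\Delta(\Gamma)\le 12Ld_B\le 24Ld(\Gamma)$ as soon as $|B|\le 12|A|$, so in this balanced regime one can simply take $\Gamma'=\Gamma$.

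Otherwise $|B|>12|A|$, and the plan is to pass to $\Gamma'=\Gamma[A_1,B_1]$ in two stages. First, sample $B_1\subseteq B$ by including each $b\in B$ independently with probability $p=12|A|/|B|$; this is chosen so that $E[|B_1|]=12|A|$ and $E[e(\Gamma[A,B_1])]=p|E|=12|A|d_B$, giving expected average degree of order $d(\Gamma)$ and a bipartition in which the two sides are balanced. Every $b\in B_1$ retains its original degree $d_\Gamma(b)\le Ld_B$, so the $B$-side of any induced subgraph of $\Gamma[A,B_1]$ is already controlled. Second, prune the $A$-side by setting $A_1=\{a\in A: d_{\Gamma[A,B_1]}(a)\le 24Ld_B\}$ and $\Gamma'=\Gamma[A_1,B_1]$, which forces $\Delta(\Gamma')\le 24Ld_B$ by construction.

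The main step is bounding the edges lost to the pruning. For each $a\in A$, one has $E[d_{\Gamma[A,B_1]}(a)]=pd_\Gamma(a)\le pLd_A=12Ld_B$, so Markov gives $\Pr[a\notin A_1]\le 1/2$ and, more importantly, $E\bigl[d_{\Gamma[A,B_1]}(a)\mathbf{1}_{\{a\notin A_1\}}\bigr]\le pd_\Gamma(a)^2/(24Ld_B)$. Summing over $a$ and invoking the almost-biregular bound $\sum_a d_\Gamma(a)^2\le Ld_A|E|$ gives an expected edge loss of at most $pLd_A|E|/(24Ld_B)=|E|/2$, where the key identity $pd_A=12d_B$ that motivated the choice of sampling rate makes the arithmetic balance. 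A positive-probability choice of $B_1$ then produces an induced subgraph $\Gamma'$ satisfying $d(\Gamma')\ge d(\Gamma)/4$ and $\Delta(\Gamma')\le 24Ld_B\le 24Ld(\Gamma')$. The main obstacle is the extremely unbalanced regime $|B|\gg|A|$: here the first-moment estimate above is not by itself sharp enough to beat $E[e(\Gamma[A,B_1])]=12|A|d_B$, and closing the gap requires either a Chebyshev-type second-moment bound on $d_{\Gamma[A,B_1]}(a)$ or a preliminary deletion of the very highest-degree $A$-vertices before sampling, with the explicit constants $1/4$ and $24L$ emerging from this final balancing.
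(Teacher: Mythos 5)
Your overall plan (subsample the larger side $B$ at rate proportional to $|A|/|B|$, prune high-degree vertices of $A$, argue in expectation) matches the paper's, and the balanced case $|B|\le 12|A|$ is handled correctly. But, as you note yourself, the argument is not closed in the unbalanced case: with $p=12|A|/|B|$ and the uniform cap $24Ld_B$, the expected-loss bound $|E|/2$ is vacuous once $p<1/2$, because $\E[e(\Gamma[A,B_1])]=p|E|<|E|/2$. Sharpening the second moment of $d_{\Gamma[A,B_1]}(a)$ to $d_\Gamma(a)p+d_\Gamma(a)^2p^2$ cuts the loss to about $\tfrac{13}{24}p|E|$, but then the best one can extract is $d(\Gamma')\ge \tfrac{11}{13}d_B$, giving only $\Delta(\Gamma')\le \tfrac{13}{11}\cdot 24\,Ld(\Gamma')\approx 28Ld(\Gamma')$; rebalancing the sampling rate and cap (say $p\approx 5|A|/|B|$, cap near $20Ld_B$) would recover $24L$, but you do not carry this out.

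The paper sidesteps the variance computation entirely by using a \emph{per-vertex} threshold rather than a uniform one: $a$ is kept exactly when $|N_\Gamma(a)\cap B'|\le 1+2p(d_\Gamma(a)-1)$, and a per-edge conditional Markov bound $\P(a\in A'\mid b\in B')>1/2$ gives $\E[e(\Gamma')]>p|E|/2$ directly. Combined with $\E[|A'|+|B'|]\le 2|A|$, the linear combination $\E\bigl[4e(\Gamma')-\tfrac{|E|}{|B|}(|A'|+|B'|)\bigr]>0$ then works uniformly over all ratios $|B|/|A|$, with no case split and no second moment. The per-vertex cap (and the per-edge conditioning) is the device your argument is missing; as written, the proof has a gap in precisely the regime that matters.
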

\begin{proof}
    Suppose $\Gamma = (A,B,E)$. If $E = \emptyset$, there is nothing to prove (we may take $\Gamma' = \Gamma)$. 
    Otherwise, as both $A$ and $B$ have vertices of nonzero degree, we have
    $L\frac{|E|}{|A|},L\frac{|E|}{|B|}\ge 1$.
    
    We may assume $|A|\le |B|$. Let $p = |A|/|B|$. We take a random subset $B'\subset B$ by adding each $b\in B$ independently with probability $p$. Let $A'\subset A$ be the set of $a\in A$ such that $|N_\Gamma(v)\cap B'|\le 1+2p(d_\Gamma(v)-1)$. We shall take $\Gamma' = \Gamma[A',B']$, and show this works with positive probability.

    By construction, we have $\Delta(\Gamma')\le 1+2L\frac{|E|}{|B|}\le 3L\frac{|E|}{|B|}$. Indeed, each vertex $a\in A'$ has degree at most $1+2pL|E|/|A|\le 3L |E|/|B|$, and each vertex $b\in B'$ has degree at most $d_\Gamma(b)\le L|E|/|B|$.

    Consider any $e= ab\in E$, and let $U = (N_\Gamma(a)\setminus \{b\}) \cap B'$. Applying Markov's inequality, we see that \[\mathbb{P}(a\in A'|b\in B') = \mathbb{P}(|U| \le 2\E[|U|])>   1/2.\] 
    Then
    $\E[|A'|+|B'|] \le |A|+\E[|B'|]=2|A|$.
    and 
    \[\E[e(\Gamma')] = \sum_{e=ab\in E} \mathbb{P}(b\in B')\mathbb{P}(a\in A'|b\in B') > p|E|/2.\]
    Thus we have 
    \begin{align*}
        \E[4e(\Gamma') - \frac{|E|}{|B|}(|A'|+|B'|)]
        &> 4(p|E|/2) - \frac{|E|}{|B|}2|A| =0.
    \end{align*}Consequently, we can choose $A',B'$ such that the expectation is positive.  Then $\Gamma'$ is non-empty and, dividing by $|\Gamma'|=|A'|+|B'|$, we get
    \[
    d(\Gamma')\ge 2\frac{|E|}{4|B|} \ge d(\Gamma)/4.
    \] 
    Since $\Delta(\Gamma')\le 3L|E|/|B|$,
    \[
    12Le(\Gamma')\ge \Delta(\Gamma')(|A'|+|B'|) 
    \]
    which implies that $\Delta(\Gamma')\le 24L d(\Gamma')$ 
    as desired.
\end{proof}




We can now use Theorem~\ref{edges in Kssfree} to deal with the case of nearly regular graphs, as discussed.

\begin{lemma}\label{deletion}

There exists an absolute constant $\kappa>0$ so that the following holds for all $s\ge 2$, $\delta\in (0,1/10)$ and $\dD \ge \kappa^{s/\delta}$. 

Let $G$ be a $K_{s,s}$-free graph with $\Delta(G)\le d$ and average degree $d(G)\ge \dD^{1-\delta /s}$. Then, $G$ contains an induced $\{C_3,C_4\}$-free subgraph $H$ with average degree at least $\dD^{(1-10\delta)/5s}$ and $\Delta(H)\le d(H)^{1/(1-10\delta)}$.

\end{lemma}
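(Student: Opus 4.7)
The plan is to use the probabilistic approach sketched in the paper: sample a random vertex subset $V' \subseteq V(G)$ by including each vertex independently with probability $p := d^{-1+1/(5s)}$, and then clean $G[V']$ in two steps. First, delete every vertex $v \in V'$ whose degree in $G[V']$ exceeds $Kpd$, where $K$ is a sufficiently large absolute constant. Second, delete one vertex from each remaining copy of $C_3$ or $C_4$. The resulting induced subgraph $H$ is automatically $\{C_3, C_4\}$-free and satisfies $\Delta(H) \le Kpd = Kd^{1/(5s)}$.

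The key technical inputs are bounds on the numbers $T_3(G)$ of triangles and $T_4(G)$ of $4$-cycles in $G$, both coming from Theorem~\ref{edges in Kssfree}. For any vertex $v$, the induced subgraph $G[N(v)]$ is $K_{s-1,s}$-free (otherwise $v$ together with the $(s-1)$-side of a $K_{s-1,s}$ in $G[N(v)]$ would yield a $K_{s,s}$ in $G$), so $e(G[N(v)]) = O(d^{2-1/(s-1)})$ and therefore $T_3(G) = O(nd^{2-1/(s-1)})$. Similarly, for any edge $uv \in E(G)$ the induced subgraph on $N(u) \cup N(v)$ has at most $2d$ vertices and is $K_{s,s}$-free, so $e(G[N(u) \cup N(v)]) = O(d^{2-1/s})$; since each $C_4$ through $uv$ contributes a distinct such edge, summing over edges gives $T_4(G) = O(e(G) \cdot d^{2-1/s}) = O(nd^{3-1/s})$.

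Writing $D := d^{(1-10\delta)/(5s)}$, the plan is to prove by a first moment argument that $\mathbb{E}[2e(H) - K^{1-10\delta} D \cdot |V(H)|] > 0$. We have $\mathbb{E}[e(G[V'])] = p^2 e(G)$ and $\mathbb{E}[T_i(G[V'])] \le p^i T_i(G)$ for $i \in \{3,4\}$. A Markov estimate gives $\mathbb{E}[|\{v \in V' : \deg_{G[V']}(v) > Kpd\}|] \le pn/K$, and a second moment bound on the binomial degree gives that the expected edge loss in step~1 is $O(p^2 e(G)/K)$. The edge loss in step~2 is at most $Kpd \cdot (T_3(G[V']) + T_4(G[V']))$, since every vertex remaining after step~1 has degree at most $Kpd$. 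Substituting $p = d^{-1+1/(5s)}$, the main term $p^2 e(G) = (pn/2) \cdot d^{(1-5\delta)/(5s)}$ exceeds $K^{1-10\delta} D \cdot pn$ by a factor of order $d^{\delta/s}/K^{1-10\delta}$, which is at least $2$ provided $d \ge \kappa^{s/\delta}$ for a sufficiently large absolute constant $\kappa$. Direct calculations show that the step~1 loss (of order $p^2 e(G)/K$) and the step~2 losses (of orders $K p^4 n d^{3-1/(s-1)}$ and $K p^5 n d^{4-1/s}$) are of lower order. Hence there exists a realization with $d(H) \ge K^{1-10\delta} D \ge D$, from which $d(H)^{1/(1-10\delta)} \ge K \cdot D^{1/(1-10\delta)} = K d^{1/(5s)} \ge \Delta(H)$, completing the proof.

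The main technical point is getting the exponents to line up exactly: the factor $d^{-1/s}$ in the bound on $T_4(G)$, obtained from $K_{s,s}$-freeness applied to the union of two neighborhoods, is essential; without it, the natural choice of $p$ would fail to satisfy either the edge-loss constraints or the target average degree.
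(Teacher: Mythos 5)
Your proposal is correct and takes essentially the same probabilistic route as the paper: sample with $p \approx d^{-1+1/(5s)}$, remove atypically high-degree vertices, then remove cycle vertices, and conclude by a first-moment calculation. There is one small bookkeeping difference worth noting. The paper removes \emph{all} vertices of every $C_3$ and $C_4$ surviving in $G[U]$, which forces it to introduce the auxiliary quantities $\mathcal{S}_3,\mathcal{S}_4$ (edge--cycle pairs sharing one vertex) to control the edges lost that are incident to, but not contained in, the deleted cycles. You instead delete only one vertex per surviving cycle and perform the degree-cleaning step \emph{first}, so that each cycle-deletion costs at most $Kpd$ edges; this lets you bound the step-2 loss directly by $Kpd\cdot(T_3(G[V'])+T_4(G[V']))$ and avoid the $\mathcal{S}_i$ counting entirely. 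Both variants are equally rigorous; yours is marginally cleaner in exposition, while the paper's is order-independent and does not require the degree bound before the cycle cleaning. Your triangle bound via $K_{s-1,s}$-freeness of $G[N(v)]$ (giving $O(d^{2-1/(s-1)})$) is slightly tighter than the paper's $K_{s,s}$-based $O(d^{2-1/s})$, though both are more than sufficient here. The exponent arithmetic checks out: with $p=d^{-1+1/(5s)}$ the main term $p^2e(G)$ beats $D\cdot pn$ by a factor $\tfrac12 d^{\delta/s}$, the step-1 loss is $O(p^2e(G)/K)$, the triangle loss is $O(Kp^4nd^{3-1/(s-1)})$ which is smaller by a factor $d^{2/(5s)-1/(s-1)+\delta/s}=d^{(5\delta-3)/(5s)}<1$, and the $C_4$ loss $O(Kp^5nd^{4-1/s})=O(Knd^{-1})$ is smaller by $d^{(5\delta-2)/(5s)}<1$; all of these give room for the requirement $d\ge\kappa^{s/\delta}$. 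The final derivation $\Delta(H)\le Kd^{1/(5s)}\le d(H)^{1/(1-10\delta)}$ from $d(H)\ge K^{1-10\delta}D$ is exactly how the paper concludes as well.
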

\begin{remark}
    We will need the upper bound on $\Delta(H)$ to derive a more technical version of Theorem~\ref{thm:main} (namely Theorem~\ref{thm:technical}), which will be useful in Section~\ref{finding subdivisions} and future applications.
\end{remark}
\begin{proof}Let $\mathcal{C}_4$ be the set of $4$-cycles $C$ in $G$. Let $\mathcal{S}_4$ be the set of pairs $(e,C)\in E(G)\times \mathcal{C}_4$ such that 
$e$ contains exactly one vertex of $C$ (and so $e\cup E(C)$ spans 5 vertices).

By K\H{o}v\'ari-S\'os-Tur\'an (Theorem~\ref{edges in Kssfree}), for each edge $xy$ there are at most $O(\dD^{2-1/s})$ edges between $N(x)$ and $N(y)$ and so at most $O(\dD^{2-1/s})$ 4-cycles passing through $xy$. As there are at most $nd$ edges, we have $|\mathcal{C}_4| = O(n\dD^{3-1/s})$. Thus $|\mathcal{S}_4| \le 4\dD |\mathcal{C}_4|=O(n\dD^{4-1/s})$.

Similarly, let $\mathcal{C}_3$ be the set of $3$-cycles $C\subset G$ and $\mathcal{S}_3$ be the set of pairs $(e,C)\in E(G)\times \mathcal{C}$ such that $e$ contains exactly one vertex of $C$.  By K\H{o}v\'ari-S\'os-Tur\'an, for each vertex $x$ there are at most $O(d^{2-1/s})$ edges in $N(x)$, and so each vertex belongs to at most $O(d^{2-1/s})$ triangles.  We conclude that $|\mathcal{C}_3|=O(nd^{2-1/s})$, and $|\mathcal{S}_3|\le 3\dD |\mathcal{C}_3|=O(nd^{3-1/s})$.

We will take $p = \dD^{(1/5s)-1}$. Let $U$ be a random subset of $V(G)$ with each vertex independently included with probability $p$. Let $\mathcal{C}'\subset \mathcal{C}_3 \cup \mathcal{C}_4$ be the set of $C$ such that $V(C)\subset U$, and define \[U' :=  \bigcup_{C\in \mathcal{C}'}V(C)\]
and
\[U^* := \{u\in U: |N_G(u)\cap U|\ge 1+4pd_G(u)\}.\]We shall consider $H:= G[U-U'-U^*]$, which clearly lacks $3$-cycles and $4$-cycles, and also has $\Delta(H)\le 1+4p\dD\le 5\dD^{1/5s}$.

Let 
\begin{align*}
X &= e(G[U]),\\
X^* &= \sum_{u\in U^*}|N_G(u)\cap U|, \\
Y&= |\mathcal{C}'|,\\
\intertext{and}
Z&= \#\{(e,C)\in \mathcal{S}_3\cup \mathcal{S}_4: e\in E(G[U])
\textrm{ and } C\in \mathcal{C}'\}.
\end{align*}

We observe that $e(H) \ge X-X^*-6Y-Z$ (indeed, for each $u\in U^*$, we lose at most $|N_G(u)\cap U|$ edges by deleting $u$; and for each $C\in \mathcal{C}'$, and any set $U\subset V(G)$, we have that $e(G[U])-e(G[U-V(C)]) \le \binom{|V(C)|}{2}+e(V(C),U\setminus V(C))$
(here the first term counts internal edges, and the second term counts the rest).

Now 
\[\E[X] = p^2e(G) \ge \frac{1}{2}n\dD^{1-\delta/s}p^2 
=\frac12npd^{(1/5 -\delta)/s}.\]
Applying Markov's inequality (as in Lemma~\ref{biregular to regular}), we get
\begin{align*}
\E[X^*] &= \sum_{u\in V(G)}\sum_{v\in N_G(u)} p^2\mathbb{P}(|N_G(u)\cap U|\ge 1+4pd_G(u)|\{u,v\}\subset U)\\
&\le \sum_{u\in V(G)} d(u) p^2/4\\
&\le p^2e(G)/2.
\end{align*}
Also, using our upper bounds on $|\mathcal{C}_3|$ and $|\mathcal{C}_4|$,
\[
\E[Y] = p^4|\mathcal{C}_4|+p^3|\mathcal{C}_3| = O(n\dD^{-1-1/5s})
=O(npd^{-2/5s}),
\]
and
\[
\E[Z] = p^5|\mathcal{S}_4|+p^4|\mathcal{S}_3|= O(n\dD^{-1})
=O(npd^{-1/5s}).
\]

Now, using the assumption that $\dD$ is sufficiently large with respect to $s,\delta$, we get
\[
\E[X-X^*-5\dD^{(1/5-2\delta)/s}|U|-6Y-Z] 
\ge np\left(\dD^{(1/5-\delta)/s}/4-5\dD^{(1/5-2\delta)/s}-O(\dD^{-1/5s})\right) \ge 0.
\]
Choosing $U$ such that the above holds, we have that $e(G[U\setminus (U'\setminus U^*)]) \ge 5\dD^{(1/5-2\delta)/s}|U|$ so $H = G[U\setminus (U'\cup U^*)]$ has average degree at least $\dD' :=5\dD^{(1/5-2\delta)/s}$ and is $C_4$-free with $\Delta(H)\le 5\dD^{1/5s}\le \dD'^{1/(1-10\delta)}$, as desired.\end{proof}

We now need the following lemma.  Roughly speaking, it states that every graph $G$ with sufficiently large average degree either contains an induced subgraph $G'\subseteq G$ which is almost-regular with still high average degree or we can find a very unbalanced bipartite (not necessarily induced) subgraph which still preserves many edges. 

\begin{lemma}\label{make extreme} There exists an absolute constant $\kappa$ so that the following holds for all $\dD\ge 2$ and $\delta\in (0,1/5)$ with $\dD\ge (1/\delta)^{\kappa /\delta}$.

Let $G_0$ be an $n$-vertex graph with $d(G_0)=\dD$ which is $\dD$-degenerate. Then either $G_0$ contains an induced subgraph $G^*$ of average degree $d(G^*)\geq 6\dD^{1-5\delta}$ and maximum degree $\Delta(G^*)\le 6\dD^{1+3\delta}$, or we can find a partition $V(G_0)=A\cup B$ such that $e(G_0[A,B])\geq n\dD/4$ and $|B|\leq 4|A|/2^{\dD^\delta}$.  
\end{lemma}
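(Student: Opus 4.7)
The plan is to perform a case analysis based on how the edge mass of $G_0$ distributes across degrees.  Set $L:=6d^{1-5\delta}$, $H:=6d^{1+3\delta}$, and $T:=d\cdot 2^{d^\delta}$, and partition the vertices as $V(G_0)=V_L\sqcup V'_H\sqcup V_T$, where $V_L:=\{v:d(v)\le H\}$, $V_T:=\{v:d(v)\ge T\}$, and $V'_H$ is the middle stratum.  A degree-sum argument gives $|V_T|\le nd/T\le n/2^{d^\delta}$ and $|V'_H|\le nd/H$.  First attempt the partition $A:=V\setminus V_T$, $B:=V_T$: since $|V_T|\le n/2^{d^\delta}$ and $|A|\ge n/2$, the size condition $|B|\le 4|A|/2^{d^\delta}$ holds automatically, so if $e(A,B)\ge nd/4$ we return this partition and stop.

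Otherwise, the $d$-degeneracy bound $e(G_0[V_T])\le d|V_T|\le 4nd/2^{d^\delta}$ forces $e(G_0[A])\ge nd/2-nd/4-o(nd)\ge nd/8$.  If $d(G_0[V_L])\ge L$, take $G^*:=G_0[V_L]$: every vertex of $V_L$ has $G_0$-degree at most $H$, so $\Delta(G^*)\le H$ and we are done.  Otherwise, $e(G_0[V_L])<Ln/2$, and combined with $e(G_0[V'_H])\le d|V'_H|\le nd^{1-3\delta}/6$, a subtraction yields $e(V_L,V'_H)=\Omega(nd)$.  I then decompose $V'_H$ dyadically into at most $\log(T/H)\le d^\delta$ classes $V'_H(j):=\{v:2^j\le d(v)<2^{j+1}\}$, and by pigeonhole some level $j^*$ satisfies $e(V_L,V'_H(j^*))=\Omega(nd^{1-\delta})$.

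Now consider the bipartite graph $\Gamma:=G_0[V_L,V'_H(j^*)]$.  Using the bounds $|V_L|\le n$ and $|V'_H(j^*)|\le nd/2^{j^*}$, the degrees on each side of $\Gamma$ are controlled up to a factor of $O(d^{4\delta})$: that is, $\Gamma$ is $O(d^{4\delta})$-almost-biregular.  Apply Lemma~\ref{biregular to regular} to extract an induced bipartite $\Gamma'$ with $d(\Gamma')=\Omega(d^{1-\delta})$ and bipartite max degree $O(d^{4\delta})\cdot d(\Gamma')$.  Viewed inside $G_0$, the vertex set $V(\Gamma')$ determines an induced subgraph that may contain within-$V'_H(j^*)$ edges raising the max degree up to $2^{j^*+1}\le 2T$.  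To suppress these, randomly include each vertex of $V(\Gamma')\cap V'_H(j^*)$ with probability $q\asymp H/T$, so that, by Markov, the expected within-part contribution to any vertex's degree is at most $qT\le H/2$; discarding any vertex that still violates the bound then yields, by a first-moment calculation in the style of Lemma~\ref{deletion}, an induced $G^*\subseteq G_0$ with $d(G^*)\ge L$ and $\Delta(G^*)\le H$.

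The main obstacle is composing the three reductions cleanly against the target exponents $1-5\delta$ and $1+3\delta$: the dyadic pigeonhole costs a $d^\delta$ factor, the biregularization via Lemma~\ref{biregular to regular} costs a further $d^{O(\delta)}$, and the random sampling costs a probability $q\asymp H/T$.  All the slack in the hypothesis $d\ge (1/\delta)^{\kappa/\delta}$ is consumed to ensure each stage loses only a $d^{O(\delta)}$ factor, so that the final average degree stays above $L$ while the final maximum degree stays below $H$; the bulk of the computation is constant-tracking through these steps, and verifying at each point that the $2^{d^\delta}$ size budget for $B$ is respected only by the very-high-degree set $V_T$ and not by any of the intermediate strata.
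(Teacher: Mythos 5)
Your decomposition (cut off very-high-degree vertices, pigeonhole into a dyadic degree band, and apply Lemma~\ref{biregular to regular}) parallels the paper's, but your final cleaning step does not work. To control the edges of $G_0$ inside $V'':=V(\Gamma')\cap V'_H(j^*)$, where a vertex can have $G_0$-degree up to $2^{j^*+1}\le 2T$ with $T=\dD\cdot 2^{\dD^\delta}$, you downsample $V''$ at rate $q\asymp H/T\asymp \dD^{3\delta}/2^{\dD^\delta}$, which is exponentially small in $\dD^\delta$. This destroys the average degree: every bipartite edge of $\Gamma'$ has an endpoint in $V''$, so deleting all but a $q$-fraction of $V''$ deletes all but roughly a $q$-fraction of the edges, while the other side $V(\Gamma')\cap V_L$ (which Lemma~\ref{biregular to regular} has already equalised in size with $V''$) is left untouched. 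The resulting average degree is of order $q\cdot\dD^{1-\delta}$, nowhere near $L=6\dD^{1-5\delta}$, and a first-moment cleanup cannot recover it because the edges are genuinely gone. The paper's fix is to exploit $\dD$-degeneracy rather than randomness: since $G_0$ restricted to any part is $\dD$-degenerate, the vertices with at least $4\dD$ within-part neighbours form at most half the part, so deleting them costs only a constant factor in edges while capping the within-part degree at $4\dD$, which is absorbed as a harmless additive term in $\Delta(G^*)$. (The pruning has to be done carefully so survivors keep most of their degree; the paper arranges this by first passing to a subgraph of minimum degree $\ge\dD/8$ and extracting a random quarter $C'$, so that each surviving vertex retains at least half its neighbours outside the part.)

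There is also a quantitative gap. You pigeonhole only on the $V'_H$ side, so on the $V_L$ side the bipartite degrees $d_\Gamma(v)$ range from $0$ up to $H=6\dD^{1+3\delta}$ while their average is only of order $\dD^{1-\delta}$; this makes your almost-biregularity constant $\Theta(\dD^{4\delta})$ rather than $O(\dD^{2\delta})$. Lemma~\ref{biregular to regular} then gives $\Delta(\Gamma')\le 24Ld(\Gamma')$ with $L=\Theta(\dD^{4\delta})$, and since $d(\Gamma')$ can be as large as $2\dD$ by degeneracy, the maximum-degree bound is $\Theta(\dD^{1+4\delta})$, which exceeds the target $6\dD^{1+3\delta}$ once $\dD^\delta$ is large. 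The paper avoids this with a \emph{second} dyadic pigeonhole on the opposite part, stratified by the number of neighbours \emph{into the first part}, so that both sides of the final bipartite graph have degrees within a fixed dyadic band and the biregularity constant comes out as $O(\dD^{2\delta})$. Your argument needs both corrections: the degeneracy-based pruning in place of the random downsampling, and a second pigeonhole to tighten the biregularity.
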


\begin{remark}\label{about kappa}
    In the proofs of Lemma~\ref{make extreme} and Lemma~\ref{make bipartite} below, we include the size assumption $\dD\ge x^{\kappa x}$ (respectively with $x=1/\delta$ and $x=s/\delta$) so that various inequalities of the form ``$C_1\log \dD\le \dD^{1/C_2x}$'' hold. 
    These will be useful later, and also imply
    that $\dD^\delta$ is at least a large constant.
    
    \hide{In the proofs of Lemma~\ref{make extreme} and Lemma~\ref{make bipartite} below, we need the assumption $\dD\ge x^{\kappa x}$ (respectively with $x=1/\delta$ and $x=s/\delta$) so that various inequalities of the form ``$C_1\log \dD\le \dD^{1/C_2x}$'' hold (for example, it will be important that $d^{\delta}$ is at least a large constant). Going forward, we shall tacitly use such inequalities in proofs without repeatedly referencing this largeness assumption (though these largeness assumptions will be included in the statements of the results).}
    
    \hide{The size assumption on $d$ in 
    the proofs of Lemma~\ref{make extreme} and Lemma~\ref{make bipartite} below is needed for various inequalities of the form ``$C_1\log \dD\le \dD^{1/C_2x}$''; it will also be important that $d^{\delta}$ is at least a large constant. Similar inequalities will be needed in later proofs.}
    
\end{remark}
\begin{proof}Let $V= V(G_0)$, and let $R\subset V$ be the vertices with degree greater than $\dD2^{\dD^{\delta}}$. Note that $e(G_0)\le nd$ and so $|R|< n2^{1-\dD^{\delta}}$. If $e(G_0[R,V\setminus R])\geq n\dD/4$ then we can simply take $B= R, A = V\setminus R$.
So we may assume that $e(G_0[R,V\setminus R])\leq n\dD/4$. 

Since $G_0[R]$ is $\dD$-degenerate (and we may assume that $\kappa$ is not too small), $e(G_0[R]) \le \dD|R|< n\dD/8$, and so $e(G_0[V\setminus R])\geq n\dD/8$. Let $G$ be an induced subgraph of $G_0[V\setminus R]$ with maximal average degree: so $d(G)\ge \dD/4$ and $\delta(G)\ge d(G)/2 \ge \dD/8$. Note that $\Delta(G)\le \dD2^{\dD^\delta}$ as it contains no vertices from $R$. 

We split $V(G)$ dyadically, according to the degree of the vertices. For $-3\le i \le \dD^{\delta}$, 
take $C_i\coloneqq \{x\in V(G): 2^i \dD\leq d_{G}(x)<2^{i+1}\dD\}$; set $\dD_i := 2^i \dD$ 
and $E_i:= \sum_{v\in C_i} d_{G}(v)$. By our bounds on $\delta(G),\Delta(G)$, the sets $C_{-3},C_{-2},\dots,C_{\dD^{\delta}}$ partition $V(G)$.

By the pigeonhole principle, we can find $j$ so that $E_j\ge 2e(G)/\dD^{2\delta}$. We now take a random subset $C'\subset C_j$ where each vertex $v\in C_j$ is included independently with probability $1/4$. Let $C'' \subset C'$ be the set of $v\in C'$ such that $|N_G(v)\cap C'|\le d_G(v)/2$. By Markov's inequality, for $v\in C_j$, we have that \[\mathbb{P}(v\in C''|v\in C') \ge \mathbb{P}(|N_G(v)\cap C'|\le 2\E[|N_G(v)\cap C'|])\ge 1/2.\] Consequently, we have
\[\E[|C''|] =\sum_{v\in C_j}\mathbb{P}(v\in C')\mathbb{P}(v\in C''|v\in C')\ge |C_j|/8.\]

Fix some choice of $C'$ such that $|C''|\ge|C_j|/8$. Note that for all $v\in C''$ and any $S\subset C''$, we have that $|N_G(v)\setminus S|\in [\dD_j/2,2\dD_j]$ (the upper bound by the definition of $C_j$ and the inclusion $C''\subset C_j$, and the lower bound by the definition of $C''$ and the inclusion $C''\subset C'$). Since $G[C'']$ is $\dD$-degenerate, we have $e(G[C'']) \le \dD|C''|$. Thus $R':=\{v\in C'': |N_G(v)\cap C''| \ge 4 \dD\}$ satisfies $|R'| \le |C''|/2$. Let $C''':= C''\setminus R'$.

It follows that $|C'''|\ge |C''|/2$, and thus $e(G[C''',V(G)\setminus C'''])\ge |C'''|\dD_j/2 \ge \dD_j|C_j|/32\ge E_j/64$.

Now consider: $V(G) \setminus C'''$: we partition these dyadically, now according to neighbours in $C'''$. Specifically, for $-\log \dD \le i \le \dD^{\delta}$, let $C_i^{*}:= \{z\in V(G)\setminus C''': 2^i \dD\le |N_G(z)\cap C'| <2^{i+1}\dD\}$ and $E_i^*:= \sum_{v\in C_i^*} |N_G(v)\cap C'''|$. The sets $C_i^*$ partition $V(G)\setminus (C''' \cup \{v:|N_G(v)\cap C'''|=0\})$.

By pigeonhole, there exists some $k$ such that $e(G[C''',C_k^*]) \ge e(G[C''',V(G)\setminus C'''])/\dD^{2\delta}$. For $v\in C_k^*$, we have $|N_G(v)\cap C'''|\in [2^k\dD,2^{k+1}\dD]$. Let $R^* := \{v\in C_k^*: |N_G(v)\cap C_k^*|\ge 4\dD\}$. By $\dD$-degeneracy, we have that $|R^*|\le |C_k^*|/2$. Thus, taking $C^{**}:= C_k^*\setminus R^*$, we have that $e(G[C''',C^{**}])\ge e(G[C''',C_k^*])/4\ge e(G)/128\dD^{4\delta}$.

For convenience, write $A:= C''', B:= C_k^{**}$ and consider $\Gamma := G[A,B]$. Since $\Delta(G[A]),\Delta(G[B]) \le 4\dD$, we have $\Delta(G[S])\le \Delta(\Gamma[S])+4\dD$ for every $S\subset A\cup B$. Thus we will be done by finding $S\subset A\cup B$ such that $d(\Gamma[S])\ge 6\dD^{1-5\delta}$ and $\Delta(\Gamma[S])\le \dD^{3\delta}d(\Gamma[S])\le 2\dD^{1+3\delta}$ (note that $G_0\supset \Gamma$ being $\dD$-degenerate implies $d(\Gamma[S])\le 2\dD$).

We now show that $\Gamma$ is $16\dD^{2\delta}$-almost-biregular.  Since $d(\Gamma)\ge \dD^{1-4\delta}/128$, we are then done by an application of Lemma~\ref{biregular to regular}. To verify almost-biregularity, we first note \[\max_{b\in B}\left\{\frac{d_\Gamma(b)|B|}{e(\Gamma)}\right\}\le \frac{\max_{b\in B}\{d_\Gamma(b)\}}{\min_{b\in B} \{d_\Gamma(b)\}} \le 2\](where the last inequality is because $d_\Gamma(b)=|N_G(b)\cap C'''|\in [2^k\dD,2^{k+1}\dD]$ for all $b\in B$). Meanwhile, writing $\Gamma' := G[A,V(G)\setminus A]$ we similarly have that \[\max_{a\in A}\left\{\frac{d_{\Gamma'}(a)|A|}{e(\Gamma')}\right\}\le \frac{\max_{a\in A}\{d_{\Gamma'}(a)\}}{\min_{a\in A} \{d_{\Gamma'}(a)\}} \le 4,\] and so $\max_{a\in A}\left\{\frac{d_{\Gamma}(a)|A|}{e(\Gamma)}\right\}\le \max_{a\in A}\left\{\frac{d_{\Gamma'}(a)|A|}{e(\Gamma)}\right\}\le 4\frac{e(\Gamma')}{e(\Gamma)}\le 16\dD^{2\delta}$.\end{proof}

\begin{lemma}\label{make bipartite}
There exists an absolute constant $\kappa$ so that the following holds for all $s\ge 2$, $\delta\in (0,1/2)$ and $\dD\ge (s/\delta)^{\kappa s/\delta}$.

Let $G_0$ be a $\dD$-degenerate, $K_{s,s}$-free graph with $V(G_0)=A_0\cup B$. Suppose that $|A_0|\geq 2^{\dD^{\delta}}|B|$ and $e(G_0[A_0,B])\ge 3\dD^{1/2}|A_0|$. 

Then, for any $r\le \dD^{\min\{\delta/4,1/3s\}}$, we can find subsets $A'\subset A_0, B'\subset B$ such that $G_0[A'],G_0[B']$ are independent sets, $|A'|\geq 2^{\dD^{\delta/2}}|B'|$, and $|N(a)\cap B'| = r$ for $a\in A'$.
\end{lemma}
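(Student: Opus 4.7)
The strategy is to combine a dyadic decomposition of $A_0$ with a random sparsification of $B$, exploiting the large size gap $|A_0|/|B|\geq 2^{\dD^{\delta}}$ to absorb cumulative losses. First, I would dyadic-decompose $A_0$ by degree to $B$: for each $i\geq 0$, put $A_0^i=\{a\in A_0 : d_H(a)\in[2^i,2^{i+1})\}$, where $H=G_0[A_0,B]$. Since vertices with $d_H(a)<\dD^{1/2}$ contribute at most $\dD^{1/2}|A_0|$ to $e(H)$, a pigeonhole over the $O(\log|B|)$ remaining dyadic classes yields some $A_1:=A_0^{i^*}$ with $|A_1|\cdot D\geq \Omega(\dD^{1/2}|A_0|/\log|B|)$, where $D:=2^{i^*}\geq \dD^{1/2}$ and each $a\in A_1$ has $d_H(a)\in[D,2D)$.

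Next, I would sample $B_0\subseteq B$ by including each vertex independently with probability $p:=\min(r/D,\,1/(C\dD))$ for a suitably large constant $C$. Combining the $\dD$-degeneracy bound $e(G_0[B])\leq \dD|B|$ with the K\H{o}v\'ari--S\'os--Tur\'an bound $e(G_0[B])=O_s(|B|^{2-1/s})$ (the second being sharper when $|B|$ is small), one verifies that $\mathbb{E}[e(G_0[B_0])]\leq p^2 e(G_0[B])$ is a small fraction of $\mathbb{E}[|B_0|]=p|B|$. Deleting one endpoint of each edge in $G_0[B_0]$ yields an independent set $B'\subseteq B_0$ with $|B'|\gtrsim p|B|$ with positive probability. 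Now set $A'':=\{a\in A_1 : |N_H(a)\cap B'|=r\}$. For each $a\in A_1$, the variable $|N_H(a)\cap B_0|\sim \mathrm{Bin}(d_H(a),p)$, and a Poisson (or local central limit) estimate gives $\mathbb{P}[a\in A'']\geq c(pD)^r/r!$ when $pD\ll 1$, and $\geq c/\sqrt{r}$ when $pD\approx r$, so $\mathbb{E}[|A''|]$ is large. Finally, since $G_0[A'']$ is $\dD$-degenerate, extract an independent subset $A'\subseteq A''$ with $|A'|\geq |A''|/(\dD+1)$; by construction $G_0[A']$ and $G_0[B']$ are independent sets and each $a\in A'$ has $|N(a)\cap B'|=r$.

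The main obstacle is verifying $|A'|\geq 2^{\dD^{\delta/2}}|B'|$, which requires tracking the losses of $\log|B|$ (from Step~1), $(\dD+1)$ (from the final independence extraction), $(pD)^r/r!$ or $1/\sqrt{r}$ (from the binomial event), together with polynomial factors in $\dD$. The twin constraints $r\leq \dD^{\delta/4}$ and $r\leq \dD^{1/(3s)}$ are engineered so that these $r$-dependent losses total at most $2^{O(\dD^{\delta/4}\log \dD)}$, comfortably inside the slack $2^{\dD^{\delta}-\dD^{\delta/2}}$ from the hypothesis: the $\delta/4$ bound controls the $r$-dependent combinatorial factors, while the $1/(3s)$ bound controls where K\H{o}v\'ari--S\'os--Tur\'an estimates are needed. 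If $|B|$ is so large that $\log|B|>\dD^{\delta/2}$, then a preliminary random sub-sampling of $B$ down to size roughly $2^{\dD^{\delta/2}}$ is required to keep $\log|B|$ under control; balancing this sub-sampling against the average-degree condition is the most delicate part of the argument.
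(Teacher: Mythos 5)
Your high-level strategy (randomly sparsify $B$, make it independent, find $a$'s with the right degree into the surviving set, then extract an independent subset of those $a$'s) is the same as the paper's, but your treatment of the central probability estimate has a genuine gap, and you are missing the one place where the K\H{o}v\'ari--S\'os--Tur\'an theorem is actually essential.

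The gap: you define $A''=\{a\in A_1:|N_H(a)\cap B'|=r\}$, but your probability estimate ($\geq c(pD)^r/r!$, etc.) is a binomial/Poisson bound for $|N_H(a)\cap B_0|$, not for $|N_H(a)\cap B'|$. These are not interchangeable. The deletion from $B_0$ to $B'$ is strongly correlated with the event you care about: if the set $N_H(a)\cap B_0$ spans an edge of $G_0[B]$, one of its elements is deleted and $a$ fails. And this is not a rare event. The only bound you have on $G_0[N(a)]$ is $K_{s,s}$-freeness, so $e(G_0[N(a)])$ can be as large as $\Theta(|N(a)|^{2-1/s})$; a random $r$-subset of $N(a)$ then spans $\Theta(r^2|N(a)|^{-1/s})$ edges in expectation, which when $r$ is near the allowed ceiling $\dD^{1/3s}$ and $|N(a)|\approx\dD^{1/2}$ is $\Theta(\dD^{2/3s-1/2s})=\Theta(\dD^{1/6s})\gg 1$. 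So conditioning on $|N_H(a)\cap B_0|=r$ does \emph{not} make it likely that these $r$ vertices all land in $B'$.

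The fix (and this is what the paper does) is to pick, for each $a\in A$, a \emph{fixed} independent set $I_a\subseteq N(a)$ of size exactly $r$ before sampling; this is where K\H{o}v\'ari--S\'os--Tur\'an is used, on $G_0[N(a)]$, and is precisely the source of the constraint $r\le\dD^{1/3s}$. One then fixes a $\dD$-degenerate orientation of $G_0[B]$, defines $B'$ as the vertices of $B_0$ with no out-neighbour in $B_0$, and requires the event $\mathcal E_a$: $B_0\cap N(a)=I_a$ and $B_0\cap\bigcup_{b\in I_a}N^+(b)=\emptyset$. Because $I_a$ is independent, the set $N(a)\cup\bigcup_{b\in I_a}N^+(b)$ has size $O(r\dD)$ and $\mathcal E_a$ is a clean product event with probability $p^r(1-p)^{O(r\dD)}$, which with $p=1/\dD^2$ is $\ge 2^{-\dD^{\delta/3}}$; on $\mathcal E_a$ we get $N(a)\cap B'=I_a$ exactly. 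Your ``delete one endpoint per edge'' prescription does not localize the deletion in this way, so it does not give a usable lower bound on $\mathbb{P}[a\in A'']$.

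Two smaller points of comparison: the paper avoids your dyadic pigeonhole entirely (and hence avoids the $\log|B|$ complication you flag) by simply discarding vertices of $A_0$ of degree $\ge 10\dD$ or $\le \dD^{1/2}$, noting via $\dD$-degeneracy that the high-degree part contributes few edges; the remaining $a$'s automatically have degree in $[\dD^{1/2},10\dD]$, which is all that is needed. The paper also passes to an independent $A\subseteq A_1$ \emph{before} sampling $B$ (via $(\dD+1)$-colourability from degeneracy), rather than at the end; this is cosmetic, but together with the above it gives a shorter argument.
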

\begin{remark}
    In our applications, we will only need the special case where $e(G_0[A_0,B])\ge \dD|A_0|/10^4$.
\end{remark}

\begin{proof}First, we clean $G_0$ so that our larger part has reasonably bounded degree. Let $A^*:= \{a\in A_0: d_{G_0}(a)\ge 10\dD\}$. We have have that $e(A^*,B)\ge 10\dD|A^*|$ and since $G_0$ is $\dD$-degenerate we must also have $e(A^*,B)\le \dD(|A^*|+|B|)$, implying $|A^*|\le |B|$ and $e(A^*,B)\le 2\dD|B|$. Next let $A^{**}:=\{a\in A_0:d_{G_0}(a)\le \dD^{1/2}\}$. We have that $e(A^{**},B)\le \dD^{1/2}|A_0|$.

Let $A_1:= A_0\setminus (A^*\cup A^{**})$. Then $e(G_0[A_1,B]) \ge (3\dD^{1/2} - 2\dD|B|/|A_0|-\dD^{1/2})|A_0|\ge \dD^{1/2}|A_0|$ and so $|A_1|\ge e(G_0[A_1,B])/10\dD \ge |A_0|/10\dD^{1/2}$. Note that $d_{G_0}(a)\in [\dD^{1/2},10\dD]$ for $a\in A_1$. 

As $G_0$ is $\dD$-degenerate, it has chromatic number at most $\dD+1$. So we can find $A\subset A_1$ with $|A|\ge |A_1|/2\dD \ge |A_0|/20\dD^{3/2}\ge |B|2^{2d^{\delta/2}}$, such that $G_0[A]$ is an independent set. Let $G = G_0[A\cup B]$.

Now let $D$ be an orientation of $G[B]$ such that $|N_D^+(b)|\le \dD$ for each $b\in B$ (this can be done as $G$ is $\dD$-degenerate). Let $B_0' \subset B$ be a random subset where each $b\in B$ is kept with probability $p = 1/\dD^2$. Let $B'$ be the set of $b\in B_0'$ where $|N_D^+(b)\cap B_0'|= 0$.

Consider any $a\in A$. Since $G[N_G(a)]$ is $K_{s,s}$-free and has at least $\dD^{1/2}$ vertices, we can fix an independent set $I\subset N_G(a)$ of size $r$ (as $r\le \dD^{1/3s}$) by K\H{o}v\'ari-S\'os-Tur\'an (Theorem~\ref{edges in Kssfree}). Let $\mathcal{E}_a$ be the event that $N_G(a)\cap B' = N_G(a)\cap B_0' = I$. Let $X = (N(a)\setminus I) \cup \bigcup_{b\in I} N_D^+(b)$. Then $|X|\le \dD(10+r)\le \dD^2/2$. Consequently, \[\mathbb{P}(\mathcal{E}_a) = p^{|I|}(1-p)^{|X|} \ge 2^{-2r\log \dD} (1/2)\ge 2^{-\dD^{\delta/3}}.\]

We set $A':= \{a\in A:\mathcal{E}_a\textrm{ holds}\}$. We have that $\E[|A'|] \ge |A|2^{-\dD^{\delta/3}} \ge 2^{\dD^{\delta/2}}|B| \ge 2^{\dD^{\delta/2}}|B'|$. Thus, there is some choice of $A',B'$ where this inequality holds. Fixing such a choice, we are done.
\end{proof}








We will also need the following slight variant of a result of F\"uredi ({\cite[Theorem~$1'$]{Furedi}}; see also \cite{d11}). We say a hypergraph $\mathcal{H}$ is {\em $s$-bounded} if $|e|\le s$ for each $e\in E(\mathcal{H})$.


\begin{theorem}\label{kernels}Let $s,r,t$ be positive integers, and let $T= \sum_{k=0}^s \binom{r}{k}$.
Let $\FF = (V,E)$ be an $r$-uniform hypergraph. There exist $E^* \subset E$ with $|E^*| \ge (tr2^{2r+1})^{-T-1}|E|$ and $c:V\to[r]$ such that:
\begin{itemize}
    \item $c(F) =[r]$ for all $F\in E^*$ (i.e.~$\FF^* := (V,E^*)$ is $r$-partite with vertex classes given by the sets $c^{-1}(i)$);
    \item there is an $s$-bounded hypergraph $\HH$ on vertex set $[r]$, such that, for $e\in \binom{[r]}{\le s}$:
    \begin{itemize}
    \item if $e\in E(\HH)$, then for every $F\in E^*$, there are distinct $F_1,\dots,F_t\in E^*$ such that $c(F\cap F_i)\supset e$ for each $i\in [t]$;
    \item if $e\not\in E(\HH)$, then for all distinct $F,F'\in E^*$, we have $c(F\cap F')\not\supset e$.
    \end{itemize}
    Thus if $F_0$ is contained in an edge of $E^*$, then $F_0$ extends to an edge of $\FF^*$ in at least $t$ different ways if $c(F_0)$ is an edge of $\HH$, and otherwise it has a unique extension.
\end{itemize}
\end{theorem}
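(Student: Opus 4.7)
My plan is to prove Theorem~\ref{kernels} in two stages: a probabilistic rainbow-coloring reduction, followed by an iterative ``cleaning'' argument in the spirit of F\"uredi's delta-system method.

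First, I would color $V$ independently and uniformly by $r$ colors via an assignment $c : V \to [r]$. Any fixed edge $F$ becomes rainbow (i.e.\ $c(F) = [r]$) with probability $r!/r^r \ge 2^{-2r}$, so an averaging argument produces a coloring under which at least $|E|/2^{2r}$ edges are rainbow. I would retain only these rainbow edges to form a subfamily $E_0$ with $|E_0| \ge |E|/2^{2r}$; every edge of $E_0$ is then $r$-partite, with one vertex in each color class $c^{-1}(i)$.

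For the main iterative step, I would enumerate the $T$ patterns $\binom{[r]}{\le s} = \{e_1,\dots,e_T\}$ and build a descending chain $E_0 \supseteq E_1 \supseteq \cdots \supseteq E_T =: E^*$, processing one pattern per round. For a subfamily $E' \subseteq E_0$, a pattern $e$, and an edge $F \in E'$, write $s(F,e) := F \cap c^{-1}(e)$ for the \emph{trace} of $F$ at $e$, and $\mu_{E'}(F,e) := |\{F' \in E' : F' \supseteq s(F,e)\}|$ for its multiplicity. At step $j$, with an appropriately chosen threshold $T_j$, I would partition $E_{j-1}$ into ``dense'' edges (those with $\mu_{E_{j-1}}(F,e_j) \ge T_j$) and ``sparse'' edges. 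If the dense part contains at least half of $E_{j-1}$, take $E_j$ to be the union of the trace-classes of size $\ge T_j$ and add $e_j$ to $\mathcal{H}$; otherwise, sample each sparse edge independently with probability $\approx 1/T_j$ and retain only those whose trace-class has been shrunk to a singleton, which by a first-moment calculation yields an $E_j$ of size at least roughly $|E_{j-1}|/T_j$ in which traces at $e_j$ are all distinct.

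Finally, I would choose the thresholds $T_j$ so that each round costs a multiplicative factor of at most $L := tr \cdot 2^{2r+1}$, yielding $|E^*| \ge |E|/L^{T+1}$ after combining with the rainbow step. The main obstacle---and what dictates a tapered choice of $T_j$---is that a sparse-case sampling at a later step $j' > j$ can shrink the multiplicity of an already-dense pattern $e_j$ below $t$, destroying the ``$t$-extension'' invariant we committed to earlier. The fix is to inflate $T_j$ enough that the cumulative shrinkage from the remaining $T - j$ rounds still leaves at least $t$ extensions; verifying that this inflation can be balanced against the sparse-case loss so that the total per-step cost stays within the budget $L = tr \cdot 2^{2r+1}$ is the technical heart of the bookkeeping, and is what makes the precise constant in the exponent $-(T+1)$ work out.
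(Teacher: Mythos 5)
Your opening step (random rainbow coloring, $r!/r^r \ge \exp(-r) \ge 2^{-2r}$) matches the paper exactly. But your main iteration diverges from the paper's, and the divergence matters: the gap you flag at the end is a genuine obstruction to your approach, not just bookkeeping, and the paper's proof is organized precisely to avoid it.

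Your plan fixes an enumeration $e_1,\dots,e_T$ of $\binom{[r]}{\le s}$ and processes one pattern per round, committing $e_j$ to $\mathcal{H}$ with threshold $T_j$ whenever the dense case occurs. The problem, as you observe, is that a later sparse round $j'>j$ takes a system-of-distinct-representatives for the $e_{j'}$-traces, and this can collapse the $e_j$-multiplicity of a surviving edge all the way to $1$: in the worst case, every other member of some $e_j$-trace class lies in a discarded $e_{j'}$-trace class. A single bad sparse round can therefore destroy the dense guarantee completely, so no finite inflation of $T_j$ is safe. The ``natural'' fix $T_j \ge t\prod_{j'>j}T_{j'}$ forces $T_T=t$, $T_{T-1}=t^2$, $T_{T-2}=t^4,\dots$, i.e.\ doubly exponential thresholds and a total loss of order $t^{2^T}$, far worse than $(tr2^{2r+1})^{T+1}$. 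So the claim that the thresholds ``can be balanced against the sparse-case loss so that the total per-step cost stays within the budget $L$'' is exactly the step that does not work as stated.

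The paper's proof avoids this interaction by never committing a pattern to $\mathcal{H}$ mid-stream. It maintains the set $S_i$ of patterns currently shared by two edges, and at each round looks at the bipartite incidence graph between edges of $E_i$ and their traces. If this graph is dense (many distinct traces), pigeonhole gives some $e_i\in S_i$ with many distinct traces, and an SDR for that pattern removes $e_i$ from $S_{i+1}$ while losing only a $(2tT^2)^{-1}$-fraction of edges; since $|S_i|$ strictly decreases, there are at most $T+1$ rounds. If the graph is sparse, the process terminates and only \emph{then} performs the ``delete any trace class meeting fewer than $t$ edges'' cleanup, once, losing at most half the edges. Because the dense cleanup happens last, no subsequent shrinkage can erode it, which is exactly the difficulty you identified. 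The adaptive choice of which pattern to process (``the one with many distinct traces, if the graph is dense'') replaces your fixed enumeration and your dense/sparse dichotomy per pattern, and $\mathcal{H}$ is simply read off as $S_\tau$ at the end. If you want to rescue your approach you would need to process patterns in a data-dependent order and delay the ``$t$-extension'' commitment to the end---at which point you have essentially rediscovered the paper's argument. As a minor secondary remark: your version also needs to process patterns in nondecreasing size order, since $e\subset e_j$ and $e_j\in\mathcal{H}$ forces $e\in\mathcal{H}$; this is fixable but unaddressed.
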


\begin{proof}Considering a random coloring $c:V\to [r]$, we have that $\mathbb{P}(c(F) =[r]) = \frac{r!}{r^r} \ge \exp(-r)$ for each $F\in E$. Thus, we can choose some $c$ such that $E_0:= \{F\in E:c(F) = [r]\}$ satisfies $|E_0|\ge \exp(-r)|E|$.

We run an iterative cleaning process, giving $E_0\supset E_1 \supset \dots \supset E_\tau$ (where $\supset$ denotes non-strict containment). For $i =0,\dots, \tau$, let $S_i$ be the set of $e\in \binom{[r]}{\le s}$ such that there exist distinct $F,F'\in E_i$ with $c(F\cap F') \supset e$. Note that $S_0\supset S_1\supset \dots \supset S_\tau$, and $|S_0|\le T$. We will ensure that:
\begin{enumerate}
    \item\label{goodstop} for all $e\in S_\tau$ and $F\in E_\tau$, there exist distinct $F_1,\dots,F_t\in E_\tau$ such that $c(F\cap F_i) \supset e$ for each $i\in [t]$;
    \item\label{dense} for $i<\tau$, we have $|E_{i+1}|\ge (2tT^2)^{-1}|E_i|$;
    \item\label{shrink} for $i<\tau-1$, we have $|S_{i+1}|<|S_i|$.
\end{enumerate}By Items~\ref{shrink} and \ref{dense}, we see that $\tau\le T+1$ and thus $|E_\tau| \ge (2tT^2)^{-T-1}|E_0|$. Considering Item~\ref{goodstop}, we see that we can take $E^* = E_\tau$, and that the size condition follows because $|E_0|\ge \exp(-r)|E|$ and $2T^2\le 2^{2r+1}$.

It remains to describe our cleaning process and confirm that Items~\ref{dense}-\ref{shrink} hold.

Suppose we have defined $E_i$
Create a bipartite graph $\Gamma$ where  one part is $A:=E_i$, and the other part is $B:=\{F\cap c^{-1}(e):F\in E_i, e\in S_i\}$. For $F\in A, U\in B$, we say $F\sim U$ if $F\supset U$. Note that $d_\Gamma(F) = |S_t|\le T$ for $F\in A$. If $|B|\le  |A|/2tT$, then we can iteratively delete $\{U\}\cup N_\Gamma(U)$ for any $U$ with degree $<t$. At the end, we will be left with a graph $\Gamma'$ with $e(\Gamma')\ge e(\Gamma)/2$. We stop the process and take $E_\tau = A(\Gamma')$.

Otherwise, $|B|\ge |A|/(2tT)$. In which case, by pigeonhole, we can find $e_i\in S_i$ such such that $|B\cap c^{-1}(e_i)|\ge |B|/|S_i|\ge |A|/(2tT^2)$. Let $E_{i+1}\subset E_i$ to be a maximal set where $c^{-1}(e_i)\cap F\neq c^{-1}(e_i)\cap F'$ for distinct $F,F'\in E_{i+1}$, we have that $|E_{i+1}|= |B\cap c^{-1}(e_i)|\ge |E_i|/(2tT^2)$ and $e_i\not\in S_{i+1}$.\end{proof} 

\section{An extremal hypergraph problem}\label{hyperproblem}

We will need some terminology for hypergraphs, some of which is non-standard.

We say a hypergraph $\mathcal{H} = (V,E)$ is \textit{covered} if for every $v\in V$ there is some $e\in E$ where $v\in e$. Recall that we say $\mathcal{H}$ is {\em $\ell$-bounded} if $|e|\le \ell$ for $e\in E$.

For $A,B\subset V$, we say $(A,B)$ is an \textit{induced pair of order $k$} if $|B|=k$ and for each $b\in B$, there exists $e\in E$ such that $A\cup \{b\} \subset e$, but there is no $e\in E$ with $e\supset A$ and $|e\cap B| > 1$.
We define $\alpha(\mathcal{F})$ to be the maximum $k$ such that there exists an induced pair $(A,B)$ of order $k$.

Let $F(\ell,k)$ denote the minimum $n$ such that every covered $\ell$-bounded hypergraph $\mathcal{H}$ on at least $n$ vertices has $\alpha(\mathcal{H})\geq k$. The following lemma tells us this quantity is always finite. 

\begin{lemma}\label{hyper upper bound}For $\ell\ge 0$, we have that $F(\ell,k)\le \sum_{l=0}^\ell (k-1)^l$.
\end{lemma}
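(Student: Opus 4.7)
The plan is to prove $F(\ell,k) \le N_\ell := \sum_{l=0}^\ell (k-1)^l$ by induction on $\ell$, using the recursion $N_\ell = 1 + (k-1)N_{\ell-1}$. The base case $\ell \le 1$ is easy: for $\ell = 0$ the hypothesis is vacuous (a covered $0$-bounded hypergraph has no vertices), while for $\ell = 1$ any $k$ vertices (which exist by $|V| \ge k$) form the desired $B$ together with $A = \emptyset$, since every vertex lies in an edge (coveredness) and no edge has size $\ge 2$ to contain two of them.

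For the inductive step, given $\mathcal{H} = (V,E)$ covered and $\ell$-bounded with $|V| \ge N_\ell$, I would consider the auxiliary graph $G$ on $V$ in which $u \sim u'$ whenever some edge of $\mathcal{H}$ contains both. There are two cases depending on $\alpha(G)$. If $\alpha(G) \ge k$, then an independent set $B$ of size $k$ in $G$ together with $A = \emptyset$ is already an induced pair of order $k$: each $b \in B$ lies in some edge by coveredness, and no edge of $\mathcal{H}$ contains two elements of $B$ by independence in $G$. Otherwise $\alpha(G) < k$, and a greedy argument shows $\Delta(G) \ge N_{\ell-1}$: iteratively pick $v_1, v_2, \dots$ with each $v_{i+1}$ avoiding $N_G[v_1] \cup \cdots \cup N_G[v_i]$; the process must terminate after at most $k-1$ picks, so if every degree were $< N_{\ell-1}$ we would cover at most $(k-1)N_{\ell-1} = N_\ell - 1 < |V|$ vertices, a contradiction.

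So we may fix $v$ with $|N_G(v)| \ge N_{\ell-1}$ and pass to the \emph{link} $\mathcal{H}_v$, whose vertex set is $N_G(v)$ and whose edges are $\{e \setminus \{v\} : e \in E, v \in e\}$. By construction $\mathcal{H}_v$ is covered and $(\ell-1)$-bounded, and it has at least $N_{\ell-1}$ vertices, so induction gives an induced pair $(A',B')$ of order $k$ inside $\mathcal{H}_v$. Setting $A := A' \cup \{v\}$ and $B := B'$, the two required properties lift directly: for each $b \in B$ the edge of $\mathcal{H}_v$ witnessing $A' \cup \{b\}$ pulls back to an edge of $\mathcal{H}$ containing $A \cup \{b\}$, and any edge of $\mathcal{H}$ containing $A$ and two elements of $B$ would, upon removing $v$, violate the corresponding property of $(A',B')$ in $\mathcal{H}_v$. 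I expect the only subtle point is the clean bookkeeping for lifting edges through the link and confirming that coveredness of $\mathcal{H}_v$ genuinely holds on $N_G(v)$; the arithmetic in the greedy degree bound is otherwise routine.
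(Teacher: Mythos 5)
Your proof is correct and follows essentially the same strategy as the paper: introduce the auxiliary graph $G_{\mathcal{H}}$ on $V$, observe that independent sets in $G_{\mathcal{H}}$ yield induced pairs $(\emptyset,B)$ so that $\alpha(\mathcal{H})<k$ forces $\alpha(G_{\mathcal{H}})<k$, locate a vertex $v$ with $d_{G_{\mathcal{H}}}(v)\ge N_{\ell-1}$, and pass to the link $\mathcal{H}_v$ (covered, $(\ell-1)$-bounded, at least $N_{\ell-1}$ vertices) to invoke the inductive hypothesis, lifting the resulting induced pair $(A',B')$ to $(A'\cup\{v\},B')$. The only cosmetic difference is in how the high-degree vertex is produced: the paper takes a maximal independent set $I$ and applies pigeonhole to $\sum_{x\in I}d_{G}(x)\ge|V|-|I|$, whereas you describe a greedy process covering $V$ by closed neighbourhoods of an independent set of size at most $k-1$; these are the same computation ($|V|\le(k-1)(\Delta(G_{\mathcal{H}})+1)$) and lead to the identical recursion $N_\ell=1+(k-1)N_{\ell-1}$.
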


\begin{proof}  
We argue by induction on $\ell$. For $\ell = 0$, it is vacuously true that $F(0,k) = 1$, as any covered hypergraph $\mathcal{H}$ with $|V(\mathcal{H})|>0$ has an edge $e$ with $|e|>0$ (and hence is not $0$-bounded).  So we may assume $\ell\ge1$.

Given a covered hypergraph $\mathcal{H}=(V,E)$, we consider the graph $G = G_\mathcal{H}$ with $V(G) = V(\HH)$ where $u,v$ are adjacent if there is some $e\in E(\mathcal{H})$ containing both $u$ and $v$. We note that if $B\subset V$ is an independent set in $G$, then $(\emptyset,B)$ is an induced pair in $\mathcal{H}$. Indeed, since $\mathcal{H}$ is covered, every vertex of   
$B$ is contained in some edge of $\HH$; and no edge meets $B$ in more than one element, as $B$ is an independent set in $G$.  Thus $\alpha(\mathcal H)\ge|B|$

Now suppose $\alpha(\mathcal{H})<k$. Let $I\subset V$ be a maximal independent set in $G = G_{\mathcal{H}}$. We must have $|I|\le k-1$, and so there must be some $x\in I$ such that $d_G(x) \ge \frac{|V|-|I|}{|I|} \ge \frac{|V|}{k-1}-1$. 

Let $\mathcal{H}' = (V',E')$, where $V' = N_G(x)$ and $E' = \{e \cap V':e\in E,\,x\in e\}$.
Then $\alpha(\mathcal{H}')\le \alpha(\mathcal{H})$, as if $(A,B)$ is an induced pair of $\mathcal{H}'$, then $(A\cup \{i\},B)$ is an induced pair of $\mathcal{H}$. Also, 
$\mathcal{H}'$ is covered and $\max\{|e'|:e'\in E'\} < \max\{|e|:e\in E\} \le \ell$.
So $|V'| < F(\ell-1,k)$, and the result follows by induction.\end{proof}

It appears to be an interesting problem to understand how $F(\ell,k)$ grows. We comment on a few different regimes below.

Letting $R(K_a,K_b)$ denote the Ramsey number for a clique of size $a$ or independent set of size $b$, we observe that $F(\ell,k) \ge R(K_{\ell+1},K_k)$. Indeed, given a graph $G$ with no cliques of size $\ell+1$ or independent sets of size $k$, we can define $\mathcal{H}_G = (V(G),\{e\subset V(G):G[e] \textrm{ is  a clique}\})$, which is covered, $\ell$-bounded, and has $\alpha(\mathcal{H}_G)<k$.
It follows that $F(k,k) \ge R(K_k,K_k)\ge \exp(\Omega(k))$. Meanwhile, Theorem~\ref{hyper upper bound} tells us $F(k,k)\le k^k$. So in this regime, our upper bound (roughly) has the right asymptotic shape.

When $k=2$, Theorem~\ref{hyper upper bound} tells us that $F(\ell,2)\le \ell+1$. It is easy to see that equality holds here (consider a hypergraph with $\ell$ vertices, which are all contained in one edge).
However, for fixed $k\ge 3$, Theorem~\ref{hyper upper bound} only gives an exponential upper bound (i.e., that $F(\ell,k)\le \exp(O_k(\ell))$). Here it seems that a much better upper bound should hold. In fact, we conjecture $F(\ell,k) \le \ell^{O_k(1)}$. But already for $k=3$, we do not know how to prove $F(\ell,3)\le \exp(o(\ell))$.

\section{Proof of main theorem}\label{sec:mainproof}

We begin in the first subsection by handling a `model' case in Proposition~\ref{model lopsided}; in the following subsection, we prove Theorem~\ref{thm:main} by using the various cleaning lemmas from Section~\ref{sec:prelems} to reduce to the model case covered  by Proposition~\ref{model lopsided}

\subsection{The model case}

Consider $s,k\ge 2$. We want to argue that if $G$ is a $K_{s,s}$-free graph with large average degree, that it contains a $C_4$-free induced subgraph $G'$ with $d(G')\ge k$.

In our proof, all the main difficulties arise when $G = (A,B,E)$ is a bipartite graph that is `lopsided', meaning $|A|/|B|$ is very large compared to $d(G)$. By applying Lemmas~\ref{kernels} and \ref{hyper upper bound}, we shall prove the following `model case'.

\begin{proposition}\label{model lopsided}
    The exists some absolute constant $C$ so that the following holds for all $s,k\ge 2$ and $r\ge k^{Cs}$, and $|A|\ge 2^{r^{Cs}}|B|$. 
    
    Let $G= (A,B,E)$ be bipartite graph, with $d_G(a)= r$ for each $a\in A$.     
    Then either $G$ contains a $K_{s,s}$, or an induced subgraph $G'$, which is $C_4$-free and has $d(G')\ge k$.
\end{proposition}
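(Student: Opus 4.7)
I would recast the problem as one about the $r$-uniform hypergraph $\FF=(B,\mathcal{N})$ whose edges are the distinct neighborhoods $\mathcal{N}:=\{N_G(a):a\in A\}$. Because $G$ is $K_{s,s}$-free and each neighborhood has size $r\ge s$, each element of $\mathcal{N}$ arises from at most $s-1$ vertices of $A$, so $|\mathcal{N}|\ge |A|/(s-1)$. The first step is to apply Theorem~\ref{kernels} to $\FF$ with parameter $s$ and an auxiliary $t$ polynomial in $k$, obtaining $E^*\subseteq \mathcal{N}$, a coloring $c\colon V(\FF)\to[r]$ with $c(F)=[r]$ for every $F\in E^*$, and an $s$-bounded hypergraph $\HH$ on $[r]$. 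The guarantee $|E^*|\ge (tr2^{2r+1})^{-T-1}|\mathcal{N}|$ with $T\le r^s+1$, combined with $|A|\ge 2^{r^{Cs}}|B|$, keeps $|E^*|$ far larger than $|B|$ once $C$ is chosen to dominate the Füredi efficiency loss $2^{O(r^{s+1})}$.

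Two short upgrades follow. First, $\HH$ is actually $(s-1)$-bounded: an edge $e\in E(\HH)$ of size $s$ would (by the theorem) produce $t+1\ge s$ distinct edges of $E^*$ sharing the $s$ vertices $\{v_j(F):j\in e\}$, where $v_j(F)$ denotes the unique color-$j$ vertex of $F$; picking one $a$ per edge would yield a $K_{s,s}$ in $G$. Second, $\HH$ is covered: if some $x\in[r]$ appears in no edge of $\HH$, then distinct $F,F'\in E^*$ never agree on color $x$, forcing $|E^*|\le|c^{-1}(x)|\le|B|$, contradicting the size of $|E^*|$.

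Next, I apply Lemma~\ref{hyper upper bound} to the covered $(s-1)$-bounded hypergraph $\HH$ on $r\ge k^{Cs}\ge F(s-1,k)$ vertices, producing an induced pair $(A_0,B_0)$ of order $k$ with $|A_0|\le s-2$; note that $E(\HH)$ is downward closed (an easy consequence of Theorem~\ref{kernels}), so $A_0\in E(\HH)$. By pigeonhole over the at-most-$|B|^{|A_0|}$ possible ``$A_0$-types,'' I select $U_0\subseteq V(\FF)$ with $c(U_0)=A_0$ maximizing $m(U_0):=|\{F\in E^*:U_0\subseteq F\}|$, and set
\[
A':=\{a\in A:N(a)\in E^*,\ U_0\subseteq N(a)\},\qquad B':=\bigcup_{a\in A'}\bigl(N(a)\cap c^{-1}(B_0)\bigr).
\]
For distinct $a,a'\in A'$ we have $U_0\subseteq N(a)\cap N(a')$, so $c(N(a)\cap N(a'))\in E(\HH)$ contains $A_0$, and by the induced pair property intersects $B_0$ in at most one element; hence $|N(a)\cap N(a')\cap B'|\le 1$, i.e.~$G[A',B']$ is induced $C_4$-free.

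Each $a\in A'$ has $|N(a)\cap B'|=k$, giving $k|A'|$ edges, so the average degree of $G[A',B']$ is at least $k$ provided $|A'|\ge|B'|$. This is where I expect the main obstacle to lie: since $|B'|\le|B|$, it suffices to force $m(U_0)\ge|B|$, which reduces to the inequality $|A|/|B|^{|A_0|+1}\ge(s-1)\cdot(tr2^{2r+1})^{T+1}$; the hypothesis $|A|/|B|\ge 2^{r^{Cs}}$ only directly gives $|A|/|B|^{|A_0|+1}\ge 2^{r^{Cs}}/|B|^{|A_0|}$, so handling the ``$|B|^{|A_0|}$'' pigeonhole loss when $|B|$ is very large is delicate. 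I expect this to be addressed either by first balancing the color classes of $c$ (so $|c^{-1}(i)|\lesssim|B|/r$ and the effective pigeonhole loss is $(|B|/r)^{|A_0|}$, which can be absorbed by the $r^{Cs}$ in the exponent), or by an auxiliary cleaning that shrinks $|B|$ while preserving enough of the ratio $|A|/|B|$; either way, choosing $C$ sufficiently large closes the estimate and completes the proof.
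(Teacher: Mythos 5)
Your plan follows the paper's route closely, but you have missed the observation that makes the whole argument close cleanly, and as a result you have invented an obstacle that does not exist.

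The key point is the ``$t$-extensions'' clause of Theorem~\ref{kernels}. Take $t\ge\max\{k,s\}$, fix \emph{any} $F\in E^*$, and set $S:=F\cap c^{-1}(A_0)$ (so $S$ plays the role of your $U_0$; no pigeonhole over $A_0$-types is needed). Define $A'$ and $B'$ as you do, but via one representative $\phi(F')$ per edge $F'\in E^*$. For $b\in B'$, there is some $a\in A'$ with $b\in N(a)=:F'$; writing $y=c(b)\in B_0$, the induced-pair property supplies $e\in E(\HH)$ with $A_0\cup\{y\}\subseteq e$, and downward closure gives $A_0\cup\{y\}\in E(\HH)$. The first bullet of Theorem~\ref{kernels}, applied to this edge and to $F'$, produces $t$ distinct $F_1,\dots,F_t\in E^*$ with $F_i\supseteq S\cup\{b\}$; their representatives all lie in $A'$ and are all neighbours of $b$. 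Hence $d_{G'}(b)\ge t\ge k$. Combined with $d_{G'}(a)=k$ for $a\in A'$, \emph{every} vertex of $G'=G[A'\cup B']$ has degree at least $k$, so $d(G')\ge k$ with no comparison of $|A'|$ against $|B'|$ at all. The ``delicate $|B|^{|A_0|}$ loss'' you flag at the end, and the proposed fixes (balancing colour classes, or shrinking $|B|$), are therefore unnecessary; the lopsidedness hypothesis on $|A|/|B|$ is needed only to guarantee $|E^*|>|V(\FF)|$, which is what you already use to show $\HH$ is covered.

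Two smaller repairs. First, you let $t$ be ``polynomial in $k$,'' but the $K_{s,s}$-extraction step requires $t\ge s-1$, so you need $t\ge\max\{k,s\}$. Second, your $A'$ admits distinct $a,a'$ with $N(a)=N(a')$, in which case $c(N(a)\cap N(a'))=[r]$ is not an edge of $\HH$ and the induced-pair argument does not apply; in fact such a pair yields a $C_4$ since $|N(a)\cap N(a')\cap B'|=k\ge 2$. This is fixed by keeping only one vertex of $A$ per edge of $E^*$, i.e.\ by passing through the injection $\phi$.
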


\begin{proof}
We may assume $G$ is $K_{s,s}$-free.  Let $\rho:= |A|/|B|\ge 2^{r^{Cs}}$.

Consider the $r$-uniform hypergraph $\mathcal{F}$, where $V(\mathcal{F})=B$ and $e\in E(\mathcal{F})$ if there is $x\in A$ such that $N_{G}(x)=e$. Since $G$ is $K_{s,s}$-free and $r\ge s$, we have that $|E(\mathcal{F})| \ge |A|/s$ (because we can't have $s$ different vertices in $A$ corresponding to the same hyperedge of $\mathcal{F}$); very crudely we have
\[|E(\mathcal{F})|\ge |A|/s = (\rho/s)|V(\mathcal{F})| \ge \rho^{1/2}|V(\mathcal{F})|.\] Fix an injection $\phi:E(\mathcal{F})\to A$ so that $f = N_{G}(\phi(f))$ for each $f\in E(\mathcal{F})$. 

We apply Theorem~\ref{kernels} with parameters $r^*=r$, $t^*=\max\{k,s\}$ and $s^*=s$ to get an $r$-partite sub-hypergraph $\mathcal{F}'\subset \mathcal{F}$ satisfying the required properties and with $|E(\mathcal{F}')| \ge (t^*r2^{2r+1})^{-\sum_{i=0}^s\binom{r}{i} -1}|E(\mathcal{F})|$; very crudely we have \[t^*r2^{2r+1}\le 2^{3r}\quad \text{and}\quad \sum_{i=0}^s \binom{r}{s} +1\le r^s,\]
and so
\[|E(\mathcal{F}')|\ge 2^{-r^{s+3}}|E(\mathcal{F})| \ge \rho^{1/4}|V(\mathcal{F})| =\rho^{1/4}|V(\mathcal{F}')|.\]
Note that, as $\rho>1$, this implies
\[|E(\mathcal{F}')|>|V(\mathcal{F}'|.\]
Let $c:B\to [r]$ be the $r$-partite coloring associated with $\mathcal{F}'$, and $\mathcal{H}$ be the hypergraph on $[r]$ as in the last bullet of Theorem~\ref{kernels}.

By Lemma~\ref{hyper upper bound}, we have $|V(\mathcal{H})| = r \ge k^{Cs} \ge F(s-1,k)$. If $E(\mathcal{H})$ has an edge of size $s$, then $G$ contains a $K_{s,s}$ which is a contradiction. Thus $\mathcal{H}$ is $(s-1)$-bounded. It is also clear that $\mathcal{H}$ is covered, as $|E(\mathcal{F}')|>|V(\mathcal{F}')|$ (Indeed, consider $i\in [r] = V(\mathcal{H})$. If the singleton $\{i\}$ doe not belong to $E(\mathcal{H})$ then no vertex in the color class $c^{-1}(i)$ is contained in two edges $f,f'\in E(\mathcal{F}')$, by definition of $\mathcal{H}$. Recalling each $f\in E(\mathcal{F}')$ intersects the color class $c^{-1}(i)$, and noting $|c^{-1}(i)|\le |V(\mathcal{F}')|<|E(\mathcal{F}')|$, we get a contradiction due to pigeonhole.)

By definition of $F(s-1,k)$, $\mathcal{H}$ must have an induced pair of order $k$, say $(X,Y)$.

Fix some $f\in E(\mathcal{F}')$, and take $S\coloneqq f\cap c^{-1}(X)$ (i.e., its intersection with the parts corresponding to $X$). Now, we consider the common neighborhood \[A':=\phi(E(\mathcal{F}'))\cap \bigcap_{v\in S} N_{G}(v),\]which is nonempty (as it contains $\phi(f)$). Next, take \[B' := c^{-1}(Y) \cap \bigcup_{a\in A'} N_{G}(a).\]Finally, let $G'$ be the induced subgraph $G[A'\cup B']$.
\begin{claim}
$G'$ is $C_4$-free and $d(G')\geq k$.
\end{claim}
\begin{proof}
Suppose $G'$ contained a $C_4$. \hide{We claim this will contradict the assumption that $(X,Y)$ is an induced pair, meaning $G'$ must be $C_4$-free as desired. To see this, one notes that any such $C_4$ would have vertices $a_1,a_2\in A''$ and $b_1,b_2\in B'$ (as $G'$ is bipartite).}This implies\hide{ \alex{deleted footnote}}
%
that there are distinct $f_1,f_2 \in E(\mathcal{F}')$ such that $c(f_1\cap f_2)\supset X\cup \{y_1,y_2\}$ for some distinct $y_1,y_2\in Y$, contradicting the fact that $(X,Y)$ is an induced pair. It follows that $G'$ must be $C_4$-free as desired.

Meanwhile, for every $b\in B'$, there are at least $t\ge k$ hyperedges $f_1,\dots,f_t\in \mathcal{F}'$ containing $b$ and $S$ (by the definitions of $\mathcal{F}',\mathcal{H}$, and the fact that $(X,Y)$ is an induced pair) so $b$ has degree at least $k$ within $G'$. Also, for $a\in A'$, we have that $N_{G'}(a) = N_{G}(a)\cap c^{-1}(Y)$ implying $d_{G'}(a) = k$ (as $|Y|=k$). Thus $d(G')\ge k$ as desired.
\end{proof}
We see that $G'$ has the desired properties, completing the proof.
\end{proof}
\begin{remark}
    Inspecting the above proof, one can get the same conclusion as long as $r\ge F(s-1,k)$, and $\rho:= |A|/|B|$ satisfies $\rho > s k^{r^{s+3}}$.
\end{remark}

\subsection{The details for general graphs}
\begin{proof}[\unskip\nopunct]
Let $G$ be a graph with average degree $\dD \ge k^{Cs^3}$ (where $C$ is some large absolute constant). We assume $G$ is $K_{s,s}$-free, and wish to deduce that $G$ contains an induced $C_4$-free subgraph, $G'$, with $d(G') \ge k$. 
We may assume that $G$ is $\dD$-degenerate, or else pass to some induced subgraph $G^*$ of $G$ with larger average degree $d^*>\dD\ge k^{Cs^3}$. Thus the assumptions of Lemma~\ref{make extreme} are satisfied, as the degeneracy of $G$ is at least the average degree of $G$.

Let $\delta_0 := 1/200s$, and for $i>0$ set $\delta_i := \delta_0/2^i$. These quantities will be used throughout the proof.

We now apply Lemma~\ref{make extreme} with parameters $\dD := \dD,\delta:= \delta_0$.  There are two cases. 

\textbf{Case 1:} {\em $G$ contains an induced subgraph $G^*$ with $d(G^*)\ge 6d^{1-5\delta_0}$ and $\Delta(G^*)\le 6d^{1+3\delta_0}$}.  In this case, we use Lemma~\ref{deletion}. Indeed, we have $d(G^*)\ge \dD^{1/2}$ and $d(G^*) \ge \Delta(G^*)^{1-8\delta_0}= \Delta(G^*)^{1-1/25s}$. Assuming $C$ is large, we are guaranteed that $d(G^*) \ge \kappa^{25s}$ with room to spare (where $\kappa$ is the absolute constant from Lemma~\ref{deletion}).

We apply Lemma~\ref{deletion} with parameters $\dD:= d(G^*),s:= s$ and $\delta:= 1/25$ (note that $d(G^*) \ge \kappa^{25s}$, assuming $C$ is large). We obtain an induced subgraph $G'$ of $G$, which is $C_4$-free, and has average degree $d(G') \ge d(G^*)^{1/10s}\ge d(G)^{1/20s} = k^{(C/20)s^2}\ge k$.

\textbf{Case 2:} {\em There is a partition $V(G) = A_0 \cup B$ such that $e(G[A_0,B]) \ge nd/4$ and $|A_0| \ge 2^{d^{\delta_1}}|B|$}. Here, we will pass to some induced subgraph where we can apply Proposition~\ref{model lopsided}. 

Let $C_0$ be the absolute constant from that proposition. We assume that $C$ is large enough so that $C/1600 \ge C_0^2$ (note that $1/1600 = s\delta_1/4 \le s\delta_2$). 
For any $r \le d^{\min\{\delta_1/4,1/3s\}} = d^{\delta_1/4}$, we can apply Lemma~\ref{make bipartite} to $G[A_0\cup B]$ to obtain $A'\subset A_0$ and $B'\subset B$, where:
\begin{itemize}
    \item $G[A'],G[B']$ are both independent sets;
    \item $|A'|\geq 2^{\dD^{\delta_2}}|B'| \ge 2^{k^{C'^2s^2 }}|B|$ (recalling $d\ge k^{Cs^3}$); and
    \item $|N(a)\cap B'| = r$ for $a\in A'$.
\end{itemize}
Because $C$ is sufficiently large, we can apply this with $r := k^{C's}$ and obtain $A'\subset A_0$ and $B'\subset B$ such that $|A'| \ge 2^{r^{C's}}|B'|$.
Writing $G^* := G[A'\cup B']$, we can now apply Proposition~\ref{model lopsided} to find the desired induced subgraph $G'$ inside $G^*$. This completes the proof.
\end{proof}

\hide{We may assume $G$ is a $\dD$-degenerate graph for some $\dD \geq k^{Cs^3}$ on $n$ vertices. 
\alex{Don't we want a bound on average degree as well?  And why not just pick a giant $d$?}
Apply Lemma~\ref{make extreme} to $G$ (with parameters $\dD=\dD,\delta=\delta_0$). If the first case holds, we are done by applying Lemma~\ref{deletion} (with parameters $\dD \ge \dD^{1/2},s=s,\delta=10\delta_0$). So we may assume we have a partition of $V(G)=A_0\cup B$ such that $e(G[A_0,B])\geq n\dD/4$ and $|A_0| \ge 2^{d^{\delta_1}}|B|$.

Now apply Lemma~\ref{make bipartite} to $G[A_0\cup B]$ with $r=\dD^{1/1000s^2}$ to obtain $A'\subset A_0$ and $B'\subset B$, where $G[A'],G[B']$ are both independent sets, $|A'|\geq 2^{\dD^{\delta_2}}|B'|$, and $|N(a)\cap B'| = r$ for $a\in A'$. Write $G' := G[A'\cup B']$.

We now consider the $r$-uniform hypergraph $\mathcal{F}$, where $V(\mathcal{F})=B'$ and $e\in E(\mathcal{F})$ if there is $x\in A'$ such that $N_{G'}(x)=e$. Since $G'$ is $K_{s,s}$-free and $r\ge s$, we have that $|E(\mathcal{F})| \ge |A'|/s \ge |V(\mathcal{F})|2^{\dD^{\delta_3}}$ (because we can't have $s$ different vertices in $A'$ corresponding to the same hyperedge of $\mathcal{F}$). Fix an injection $\phi:E(\mathcal{F})\to A'$ so that $f = N_{G'}(\phi(f))$ for each $f\in E(\mathcal{F})$.

We apply Theorem~\ref{kernels} with $r=r$, $t=\max\{k,s\}$ and $s=s$ to get a sub-hypergraph $\mathcal{F}'\subset \mathcal{F}$ satisfying the required properties and with $|E(\mathcal{F}')| \ge |E(\mathcal{F})|k^{-r^{s+3}} \ge 2^{\dD^{\delta_4}}|V(\mathcal{F})|$. Let $c:B'\to [r]$ be the $r$-partite coloring associated with $\mathcal{F}'$, and $\mathcal{H}$ be the hypergraph on $[r]$ as in the last bullet. 

By Lemma~\ref{hyper upper bound}, we have that $|V(\mathcal{H})| = r \ge k^{C's} \ge F(s-1,k)$. If $E(\mathcal{H})$ has a hyperedge of size $s$, then $G$ contains a $K_{s,s}$ which is a contradiction. Thus $\mathcal{H}$ is $(s-1)$-bounded. It also straight-forward to see $\mathcal{H}$ is covered (because $|E(\mathcal{F}')|> |V(\mathcal{F}')|$, each vertex in $\mathcal{H}$ must be contained in a singleton). So by definition of $F(s-1,k)$, $\mathcal{H}$ must have an induced pair of order $k$, say $(X,Y)$.

Fix some $f\in E(\mathcal{F}')$, and take $S\coloneqq f\cap c^{-1}(X)$ (i.e., its intersection with the parts corresponding to $X$). Now, we consider the common neighborhood \[A'':=\phi(E(\mathcal{F}'))\cap \bigcap_{v\in S} N_{G'}(v),\]which is nonempty by the definition of $\mathcal{H}$ (as it at least contains $\phi(f)$). Next, take \[B'' := c^{-1}(Y) \cap \bigcup_{a\in A''} N_{G'}(a).\]Finally, let $G''$ be the induced subgraph $G[A''\cup B'']$.
\begin{claim}
$G''$ is $C_4$-free and $d(G'')\geq k$.
\end{claim}
\begin{proof}
Suppose $G''$ contained a $C_4$. \hide{We claim this will contradict the assumption that $(X,Y)$ is an induced pair, meaning $G''$ must be $C_4$-free as desired. To see this, one notes that any such $C_4$ would have vertices $a_1,a_2\in A''$ and $b_1,b_2\in B''$ (as $G''$ is bipartite).}This will imply\footnote{The desired implication follows from unpacking several definitions. For the convenience of the reader, we do this here.  

As $G''$ is bipartite, any such 4-cycle would have vertices $a_1,a_2\in A''$ and $b_1,b_2\in B''$. Letting $y_i = c(b_i)$, we have that $y_1,y_2$ are distinct elements of $Y$, as $b_1,b_2$ are distinct elements of $N_{G''}(a_1)$. One then takes $f_1,f_2\in E(\mathcal{F}')$ so that $\phi(f_i) = a_i$ (this can be done by definition $A''$), they must be distinct as $a_1\neq a_2$ and $\phi$ is a function. Finally, it is clear that $c(f_1\cap f_2) = c(N_{G'}(a_1)\cup N_{G'}(a_2)) \supset c(S\cup\{b_1,b_2\}) = X\cup \{y_1,y_2\}$.} there are distinct $f_1,f_2 \in E(\mathcal{F}')$ such that $c(f_1\cap f_2)\supset X\cup \{y_1,y_2\}$ for some distinct $y_1,y_2\in Y$, contradicting the fact that $(X,Y)$ is an induced pair. It follows that $G''$ must be $C_4$-free as desired.

Meanwhile, for every $b\in B''$, there is some there are at least $k$ hyperedges $f_1,\dots,f_k\in \mathcal{F}'$ containing $b$ and $S$ (by the definitions of $\mathcal{F}',\mathcal{H}$, and $(X,Y)$ being an induced pair) so $b$ has degree at least $k$ within $G''$. Also, for $a\in A''$, we have that $N_{G''}(a) = N_{G'}(a)\cap c^{-1}(Y)$ implying $d_{G''}(a) = k$ (as $|Y|=k$). Whence, as every $v\in V(G'')$ has degree $\ge k$, we see $d(G'')\ge k$ as desired.
\end{proof}}

Inspecting our proof, one obtains the following technical strengthening of Theorem~\ref{thm:main}.
\begin{theorem}\label{thm:technical}
    There exists an absolute constant $C$ such that the following holds for all $\dD,s,k \ge 2$ and $\delta\in (0,1/20)$ such that $\dD\ge (k/\delta)^{Cs^3/\delta}$.
    
    Let $G$ be a graph with average degree $d(G)\ge \dD$  without a $K_{s,s}$. Then $G$ has an induced subgraph $G'$ which is $\{C_3,C_4\}$-free with $d(G')\ge k$ and either
    \begin{itemize}
        \item $d(G')\ge \Delta(G')^{1-\delta}$; or
        \item $G'$ is bipartite.
    \end{itemize}
    Furthermore, we can ensure the first outcome if $\Delta(G)\le \dD 2^{\dD^{\delta/(1000s)}}$.
\end{theorem}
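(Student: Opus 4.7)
The plan is to adapt the proof of Theorem~\ref{thm:main} by making its parameters depend on the desired exponent $\delta$, rather than fixing them to specific constants like $1/(200s)$ and $1/25$. Concretely, I would set $\delta_0 := \delta/(1000s)$, $\delta_1 := \delta_0/2$, $\delta_2 := \delta_1/2$, and $\delta_{\mathrm{del}} := \delta/10$. The assumption $\dD \ge (k/\delta)^{Cs^3/\delta}$ with $C$ sufficiently large is engineered so that the size hypotheses of Lemmas~\ref{make extreme}, \ref{deletion}, \ref{make bipartite}, and Proposition~\ref{model lopsided} all hold with these choices (each such hypothesis is of the form $\dD \ge x^{O(x)}$ with $x = O(s/\delta)$, and all are absorbed by our one assumption). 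After passing to a $\dD$-degenerate subgraph if necessary, apply Lemma~\ref{make extreme} to $G$ with parameter $\delta_0$.

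In the almost-regular case, Lemma~\ref{make extreme} produces $G^* \subseteq G$ with $d(G^*) \ge 6\dD^{1-5\delta_0}$ and $\Delta(G^*) \le 6\dD^{1+3\delta_0}$. I would feed $G^*$ into Lemma~\ref{deletion} with parameters $s$, $\delta_{\mathrm{del}}$, and maximum-degree bound $6\dD^{1+3\delta_0}$. The hypothesis $d(G^*) \ge (6\dD^{1+3\delta_0})^{1-\delta_{\mathrm{del}}/s}$ reduces to verifying $\delta/(10s) \ge 8\delta_0$, which holds comfortably since $\delta_0 = \delta/(1000s)$. The output $G'$ is $\{C_3,C_4\}$-free with $\Delta(G') \le d(G')^{1/(1-\delta)}$, equivalently $d(G') \ge \Delta(G')^{1-\delta}$, and average degree at least $\dD^{(1-\delta)/(6s)} \ge k$ by the hypothesis on $\dD$. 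This is exactly the first outcome.

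Otherwise, Lemma~\ref{make extreme} returns a partition $V(G) = A_0 \cup B$ with $e(G[A_0,B]) \ge n\dD/4$ and $|A_0|/|B| \ge 2^{\dD^{\delta_0}-2} \ge 2^{\dD^{\delta_1}}$. I would then apply Lemma~\ref{make bipartite} with parameter $\delta_1$ and $r := k^{C_0 s}$ (where $C_0$ is the constant from Proposition~\ref{model lopsided}); the requirement $r \le \dD^{\min\{\delta_1/4,\,1/(3s)\}}$ holds by the size hypothesis on $\dD$. The output $(A', B')$ satisfies $G[A'], G[B']$ independent sets and $|A'|/|B'| \ge 2^{\dD^{\delta_2}} \ge 2^{r^{C_0 s}}$. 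Since $G[A' \cup B']$ is then bipartite (all edges run between $A'$ and $B'$), Proposition~\ref{model lopsided} applied to it yields a $C_4$-free induced subgraph $G'$ with $d(G') \ge k$ that is automatically triangle-free and bipartite. This is the second outcome.

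For the furthermore clause, suppose $\Delta(G) \le \dD 2^{\dD^{\delta/(1000s)}} = \dD 2^{\dD^{\delta_0}}$. Examining the proof of Lemma~\ref{make extreme}, the partition outcome is accessed only through the set $R$ of vertices of degree exceeding $\dD 2^{\dD^{\delta_0}}$; under our additional hypothesis this $R$ is empty, so $e(G[R, V\setminus R])=0 < n\dD/4$ and the partition case cannot be reached. Hence Case~1 is forced and we obtain the first outcome. The main obstacle, as advertised, is purely bookkeeping: checking that scaling every threshold by a factor of $s/\delta$ does not break any of the parameter inequalities downstream. Since no step requires genuinely new ideas beyond those already deployed in the proof of Theorem~\ref{thm:main}, I expect these verifications to be routine once $C$ is taken large enough.
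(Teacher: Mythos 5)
Your proposal is correct and takes essentially the same approach as the paper's own proof, which itself is just a brief pointer: ``repeat the argument of Theorem~\ref{thm:main} with $\delta_0 = \delta/(1000s)$, read off the first outcome from the full statement of Lemma~\ref{deletion} in Case~1, note bipartiteness in Case~2, and observe that $R = \emptyset$ forces Case~1 under the extra maximum-degree hypothesis.'' You fill in the parameter bookkeeping (including the explicit choice $\delta_{\mathrm{del}} = \delta/10$, which the paper leaves implicit) faithfully and correctly.
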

\begin{proof}
    Repeating the argument above with $\delta_0= \frac{\delta}{1000s}$, we will obtain $G'$ satisfying one of the two bullets. In Case 1, it is easy to check that the first outcome holds, by looking at the full statement of Lemma~\ref{deletion}. And in Case 2, we end up passing to a bipartite graph, causing the second outcome to hold.

    Under the assumption that $\Delta(G)\le \dD 2^{\dD^{\delta/(1000s)}}$, the set $R$ from the proof of Lemma~\ref{make extreme} will be empty, so we can find an induced subgraph $G^*$ with $d(G^*)\ge 6d^{1-5\delta_0},\Delta(G^*)\le 6d^{1+3\delta_0}$. In this case, we reach Case~1, and thus can ensure the first outcome.
\end{proof}

\section{Lower bounds}\label{sec:lower}

We require the following well-known extremal bound of Reiman \cite{reiman} (which slightly improved the bounds of K\H{o}v\'ari-S\'os-Tur\'an \cite{kovari}).

\begin{proposition}[{\cite[Equation~1.4]{reiman}}]\label{extremal number of c4}Any $n$-vertex $C_4$-free graph $G$ has at most $\frac{1}{2}n^{3/2}+n/4+1$ edges. \end{proposition}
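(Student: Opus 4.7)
My plan is to prove this via the standard double-counting argument that goes back to Kővári--Sós--Turán (and was sharpened slightly by Reiman). The key observation is that in a $C_4$-free graph, any two distinct vertices have at most one common neighbour; otherwise two common neighbours would close a $C_4$. This immediately gives the counting inequality
\[
\sum_{v \in V(G)} \binom{d(v)}{2} \;\le\; \binom{n}{2},
\]
since the left-hand side counts ordered triples $(v, \{u,w\})$ with $u,w \in N(v)$, and each pair $\{u,w\}$ contributes at most once on the right.

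Next I would apply convexity (Jensen's inequality, or equivalently Cauchy--Schwarz on $\sum d(v)^2 \geq (\sum d(v))^2/n = 4e(G)^2/n$) to lower-bound the left-hand side in terms of the average degree $d \coloneqq 2e(G)/n$. Specifically, $\sum_v \binom{d(v)}{2} \ge n\binom{d}{2} = \tfrac{n}{2}d(d-1)$. Combining with the inequality above yields $d(d-1) \le n-1$, i.e.\ $d^{2}-d-(n-1)\le 0$. Solving this quadratic gives
\[
d \;\le\; \tfrac{1}{2}\bigl(1 + \sqrt{4n-3}\bigr).
\]

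Multiplying by $n/2$ to convert back to an edge count, and using the crude bound $\sqrt{4n-3}\le 2\sqrt{n}$, we obtain
\[
e(G) \;=\; \tfrac{nd}{2} \;\le\; \tfrac{n}{4} + \tfrac{n\sqrt{4n-3}}{4} \;\le\; \tfrac{1}{2}n^{3/2} + \tfrac{n}{4},
\]
which is even slightly stronger than the stated bound (the extra $+1$ in the statement provides harmless slack, for instance to absorb rounding or to handle trivially small $n$).

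There is no real obstacle here: the only mildly delicate point is the application of Jensen, which requires the degree sequence to be well-defined (handled by treating isolated vertices as contributing $0$) and the cleaner form of the bound after taking square roots. I would present the proof in just a few lines, since each step is elementary.
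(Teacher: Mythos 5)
The paper does not give a proof of this proposition; it simply cites Reiman's 1958 paper as the source. Your argument is the standard double-counting proof of this classical result (the one Reiman's argument is a refinement of), and it is correct: counting cherries via the $C_4$-free property gives $\sum_v \binom{d(v)}{2} \le \binom{n}{2}$, convexity gives $\sum_v \binom{d(v)}{2} \ge n\binom{d}{2}$ with $d = 2e(G)/n$, and solving the resulting quadratic $d(d-1) \le n-1$ plus the bound $\sqrt{4n-3} \le 2\sqrt{n}$ yields $e(G) \le \tfrac12 n^{3/2} + \tfrac{n}{4}$, which is slightly sharper than the stated bound. Every step checks out.
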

\begin{corollary}\label{verts in C4free}Any $C_4$-free graph $G$ with average degree $d(G)\ge k\ge 1$ must have $|V(G)| \ge (k-1)^2$.\end{corollary}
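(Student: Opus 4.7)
The plan is to combine Proposition~\ref{extremal number of c4} with the average-degree hypothesis, which produces a quadratic inequality in $\sqrt{|V(G)|}$ that is essentially sharp for the claim.

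First I would write $n := |V(G)|$ and use $e(G) = d(G)\,n/2 \ge kn/2$ to get from Proposition~\ref{extremal number of c4} the inequality
\[
\tfrac{1}{2}kn \;\le\; e(G) \;\le\; \tfrac{1}{2}n^{3/2}+\tfrac{n}{4}+1.
\]
Rearranging (multiplying by $2$ and dividing by $n$) yields
\[
2k-1 \;\le\; 2\sqrt{n}+\tfrac{4}{n}.
\]

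Next I would split according to whether $n \ge 4$. If $n\ge 4$, then $4/n \le 1$, so the displayed inequality gives $2k-1 \le 2\sqrt{n}+1$, i.e.\ $k-1 \le \sqrt{n}$, which is the desired bound $n \ge (k-1)^2$. For the boundary cases $n \le 3$, I would argue directly: since $d(G)\le n-1\le 2$, we have $k\le 2$, and so $(k-1)^2 \le 1$. The case $k=1$ is trivial as $(k-1)^2=0$, and if $k=2$, then $G$ contains at least one edge so $n\ge 2 \ge 1 = (k-1)^2$.

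There is no real obstacle here; the entire content is a routine algebraic manipulation of Reiman's bound, with a brief check that the small-$n$ regime does not cause trouble. The only thing to be careful about is keeping track of the additive lower-order terms $n/4+1$ in Reiman's bound, which is precisely why one needs to separate the case $n\le 3$ rather than obtaining $n\ge (k-1)^2$ immediately.
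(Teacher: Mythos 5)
Your proof is correct, and since the paper states Corollary~\ref{verts in C4free} without proof as an immediate consequence of Proposition~\ref{extremal number of c4}, it is exactly the intended derivation. The algebra is right (from $kn/2 \le \tfrac12 n^{3/2}+n/4+1$ one gets $k-1\le \sqrt{n}$ once $4/n\le 1$), and your separate treatment of $n\le 3$ via $k\le d(G)\le n-1\le 2$ is the natural way to absorb the additive lower-order terms in Reiman's bound.
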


Given an integer $K$ and $p\in [0,1]$, let $q(K,p)$ be the probability that $G\sim G(K,p)$ is $C_4$-free. (We write $G(K,p)$ for the Erd\H{o}s-R\'{e}nyi random graph where each edge of the $K$-vertex complete graph is included independently at random with probability $p$.)
\begin{proposition}\label{qbound}We have that \[q(K,p)\le (1-p^4)^{\binom{\lfloor K/2\rfloor}{2}} \le \exp\left(-p^4\binom{\lfloor K/2\rfloor}{2}\right).\]
\end{proposition}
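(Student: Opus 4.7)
The plan is to exhibit many disjoint $4$-tuples of edges, each of which would independently create a $C_4$ if all $4$ were present, and then apply independence together with the elementary bound $1-x\le e^{-x}$.

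Concretely, set $m := \lfloor K/2\rfloor$ and fix a matching $M = \{e_1,\dots,e_m\}$ in the complete graph $K_K$, writing $e_i = \{a_i,b_i\}$ so that the $2m$ vertices $a_1,b_1,\dots,a_m,b_m$ are distinct. For each pair $1\le i<j\le m$, consider the four ``cross'' edges between $\{a_i,b_i\}$ and $\{a_j,b_j\}$, namely $\{a_i,a_j\}, \{a_j,b_i\}, \{b_i,b_j\}, \{b_j,a_i\}$. These four edges form a copy of $C_4$ on the vertex set $\{a_i,a_j,b_i,b_j\}$ (traversed as $a_i\to a_j\to b_i \to b_j \to a_i$). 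Let $A_{ij}$ be the event that all four of these cross edges are present in $G\sim G(K,p)$; then $\mathbb{P}(A_{ij})=p^4$, and if $A_{ij}$ occurs then $G$ contains a $C_4$.

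The key point is independence: the edge set witnessing $A_{ij}$ is exactly $E(K_{2,2})$ between $\{a_i,b_i\}$ and $\{a_j,b_j\}$, and for distinct pairs $\{i,j\}\neq\{k,l\}$ these edge sets are disjoint (even when the pairs share a single index, say $j=k$, the edges go to different matching edges $e_i$ vs.\ $e_l$, hence do not overlap). Therefore the family $\{A_{ij} : 1\le i<j\le m\}$ consists of mutually independent events.

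Since $G$ being $C_4$-free forces every $A_{ij}$ to fail, we conclude
\[
q(K,p) \;\le\; \mathbb{P}\Bigl(\bigcap_{1\le i<j\le m} A_{ij}^c\Bigr) \;=\; \prod_{1\le i<j\le m} (1-p^4) \;=\; (1-p^4)^{\binom{m}{2}},
\]
which is the first inequality. The second inequality then follows immediately from $1-x\le e^{-x}$ applied with $x=p^4$. There is no real obstacle here; the only thing to verify with any care is the disjointness of the edge sets underlying the events $A_{ij}$, which is why the matching structure on the $a_i,b_i$ is used rather than, say, a greedy choice of pairs.
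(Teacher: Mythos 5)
Your proof is correct and takes essentially the same approach as the paper: both partition the $K$ vertices into $\lfloor K/2\rfloor$ disjoint pairs (your matching $e_i=\{a_i,b_i\}$ corresponds to the paper's pairs $\{2i-1,2i\}$), observe that for each pair of pairs the four cross edges form a potential $C_4$ whose presence-events are mutually independent because their underlying edge sets are disjoint, and then multiply. Your write-up is actually slightly more careful than the paper's (which contains a small typo conflating the event with its complement), but the underlying argument is identical.
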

\begin{proof}Sample $G\sim G(K,p)$. For $i<j\in [\lfloor K/2\rfloor ]$, $\mathcal{E}_{i,j}$ be the everealsednt that $G[\{2i-1,2i\},\{2j-1,2j\}]$ is a $C_4$.  Then $\mathbb{P}(\mathcal{E}_{i,j}) = 1-p^4\le \exp(-p^4)$, and these events are independent. The result follows.\end{proof}

\begin{theorem}\label{lower bound conditions}Let $k,s\ge 2$. Choose $p,n$ such that \[\max\{n^2p^s,n (1- p^4)^{(k^2-3k-2)/4},\exp(-\binom{n}{2}p/4)\}\le 1/2.\]Then there exists an $n$-vertex graph $G$ with $d(G)\ge \frac{n-1}{2}p$, such that $G$ is $K_{s,s}$-free, and every $C_4$-free induced subgraph $G'\subset G$ has $d(G')<k$.

\end{theorem}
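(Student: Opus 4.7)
The plan is to use the probabilistic method. Take $G \sim G(n,p)$, the Erd\H{o}s--R\'enyi random graph on $n$ vertices with edge-probability $p$, and show that with positive probability $G$ simultaneously satisfies: (i) $d(G) \ge (n-1)p/2$, (ii) $G$ is $K_{s,s}$-free, and (iii) no $C_4$-free induced subgraph $G' \subseteq G$ has $d(G') \ge k$. Each of the three conditions in the hypothesis will control the probability of one of these failing, and a union bound (possibly with constants tightened from $1/2$ down to $1/3$ or similar) will give total failure probability strictly less than $1$.

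For (i), since $e(G) \sim \mathrm{Bin}(\binom{n}{2}, p)$, a Chernoff-type estimate gives $\P(e(G) < \binom{n}{2}p/2) \le \exp(-c\binom{n}{2}p)$ for a suitable constant $c$, which is controlled by the third hypothesis $\exp(-\binom{n}{2}p/4) \le 1/2$. For (ii), Markov applied to the expected number of copies of $K_{s,s}$ yields $\P(K_{s,s} \subseteq G) \le n^{2s}p^{s^2} = (n^2 p^s)^s \le (1/2)^s$, using the first hypothesis.

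For (iii), I would first invoke Corollary~\ref{verts in C4free}: any bad $G'$ satisfies $|V(G')| \ge (k-1)^2$. Then for each subset $S \subseteq V(G)$ with $|S| \ge (k-1)^2$, Proposition~\ref{qbound} gives $\P(G[S] \text{ is } C_4\text{-free}) \le (1-p^4)^{\binom{\lfloor |S|/2\rfloor}{2}}$, using that this probability decays rapidly in $|S|$. Summing (with the largest contribution coming from $|S| = (k-1)^2$) and invoking the second hypothesis $n(1-p^4)^{(k^2-3k-2)/4} \le 1/2$ should bound the total probability by a constant less than $1$.

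The main obstacle is step (iii): the hypothesis has a linear factor $n$ rather than $\binom{n}{K}$, and the exponent $(k^2-3k-2)/4 = \binom{k-1}{2}/2 - 1$ is considerably smaller than the $\binom{\lfloor (k-1)^2/2\rfloor}{2}$ that a naive union bound over all $(k-1)^2$-subsets produces. This suggests the proof uses a vertex-rooted certificate: fix a candidate high-degree vertex $v$ in the putative bad $G'$, consider a $k$-subset $U$ of $v$-neighbors (necessarily inducing a matching in $G$ by $C_4$-freeness of $G[S]$), and bound things per~$v$ using disjoint pair-of-pair events in $U$ as in the proof of Proposition~\ref{qbound}. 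The key point is that the edges $\{vu : u\in U\}$ and the edges inside $U$ are disjoint, giving independence between the $v$-incidence event and the $C_4$-avoidance event. Getting the exact exponent will require care in picking the right combinatorial structure inside a putative $G[S]$ whose probability of occurrence matches $(1-p^4)^{(k^2-3k-2)/4}$.
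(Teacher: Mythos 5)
Your handling of (i) and (ii) matches the paper's argument exactly. For (iii) you start in the right place and then talk yourself out of it. The paper does not root the count at a vertex. It sets $K:=k^2-3k$ (chosen so that $K$ is always even and $K\le(k-1)^2$), and lets $X$ count the $K$-subsets $S$ with $G[S]$ $C_4$-free. By Corollary~\ref{verts in C4free}, together with the fact that $C_4$-freeness is inherited by subsets, $X=0$ already rules out any $C_4$-free induced subgraph of average degree at least $k$. One then computes
\[
\E[X]=\binom{n}{K}\,q(K,p)\le n^K(1-p^4)^{\binom{K/2}{2}},
\]
and the piece you are missing is that the hypothesis $n(1-p^4)^{(k^2-3k-2)/4}\le 1/2$ gets raised to the $K$-th power, so the factor $n^K$ from $\binom{n}{K}\le n^K$ is absorbed exactly. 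There is no mismatch between a ``linear'' $n$ in the hypothesis and $\binom{n}{K}$ in the union bound; you should be comparing $\E[X]$ with $\bigl(n(1-p^4)^{(k^2-3k-2)/4}\bigr)^K$, not with a single copy of $n(1-p^4)^{\binom{K/2}{2}}$. No vertex-rooted certificate is needed, and the rest of the proof is the union bound you describe over the three bad events.

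You are, however, right to sense that the exponent arithmetic does not quite line up, just not for the reason you gave. One has $\binom{K/2}{2}=K(K-2)/8$, whereas the paper's displayed identity implicitly uses $K\cdot(k^2-3k-2)/4=K(K-2)/4$, a factor-of-two slip. The statement is correct with $(k^2-3k-2)/8$ in the hypothesis, or with $K$ taken roughly twice as large, and the consequence is only a change of constants in Theorem~\ref{thm:lower}\ref{fixed k case}. This is a constant-level typo in the source, not a structural gap, and it certainly does not call for replacing the plain first-moment count over $K$-subsets with something more elaborate.
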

\begin{proof} Consider $G\sim G(n,p)$
and fix $K:= k^2-3k$ (which is always even). Let $X$ count the number of $K$-subsets $S \in \binom{[n]}{K}$ such that $G[S]$ is $C_4$-free. By Corollary~\ref{verts in C4free}, if $X=0$, then $G$ does not contain an induced $C_4$-free subgraph $G'$ with $d(G')\ge k$. We note that (by Proposition~\ref{qbound})
\begin{align*}
    \mathbb{E}[X] = \binom{n}{K} q(K,p)&\le n^K (1-p^4)^{\binom{K/2}{2}} 
    = \left(n (1-p^4)^{(k^2-3k-2)/4}\right)^K.
\end{align*}
Now let $Y$ count the number of copies of $K_{s,s}$ in $G$. Then $\E[Y] \le n^{2s}p^{s^2} = (n^2p^s)^s$.

By assumption, $k,s\ge 2$ (and so $K\ge 2$). Since $\max\{n(1-p^4)^{(k^2-3k-2)/4},n^2p^s\}\le 1/2$, we get \[\mathbb{P}(X=Y=0) \ge 1- \mathbb{E}[X]-\mathbb{E}[Y]\ge 1/2.\]Meanwhile, a Chernoff bound tells us that $\mathbb{P}(e(G)\le \binom{n}{2}p/2) < \exp(-\binom{n}{2} p/4)$.

It follows by our assumptions on $n,p$ that with positive probability, we will have $X=Y=0$ realsedrealsedrealsedand $e(G)\ge p\binom n2/2$. Taking such a $G$ gives our result.
\end{proof}

Recall that $f_{\Ind}(s,k)$ denotes the smallest $D$ such that for any $K_{s,s}$-free graph $G$ with $d(G)\ge D$, there exists a $C_4$-free induced subgraph $G'\subset G$ with $d(G)\ge k$. We will use Theorem~\ref{lower bound conditions} to get some lower bounds for this function.
\begin{corollary}[Theorem~\ref{thm:lower} \ref{diagonal case}]\label{diagonal lower}
   For sufficiently large $k$, we have that $f_{\Ind}(k,k)\ge k^{k/21}$.
\end{corollary}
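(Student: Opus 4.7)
The plan is to apply Theorem~\ref{lower bound conditions} with $s=k$ and a judicious choice of $n$ and $p$ so that the resulting graph has average degree at least $k^{k/21}$. Since the conclusion yields $d(G)\ge (n-1)p/2$, one wants $np$ of order at least $k^{k/21}$ while satisfying the three hypotheses. The binding constraint is typically $n^2p^k\le 1/2$, which caps $n$ at roughly $p^{-k/2}$; the second hypothesis (essentially $p^4k^2\gtrsim \log n$) is easy whenever $p$ is not too small. Any fixed $p=k^{-\alpha}$ with $\alpha\in(2/21,\,1/4)$ should comfortably meet both constraints; for concreteness I take $p:=k^{-1/10}$ and $n:=\lfloor k^{k/20}/2\rfloor$.

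With these choices the verifications are routine. For the first hypothesis, $n^2p^k\le (k^{k/10}/4)\cdot k^{-k/10}=1/4$. For the second, use $(1-p^4)^{(k^2-3k-2)/4}\le \exp(-p^4(k^2-3k-2)/4)$ and observe that $p^4(k^2-3k-2)/4=\Theta(k^{8/5})$, whereas $\log n=O(k\log k)$; so the product is at most $1/2$ once $k$ is large. The third hypothesis $\exp(-\binom{n}{2}p/4)\le 1/2$ is immediate since $\binom{n}{2}p$ is enormous.

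Then Theorem~\ref{lower bound conditions} produces a $K_{k,k}$-free graph $G$ with $d(G)\ge (n-1)p/2\ge k^{k/20-1/10}/5$, every $C_4$-free induced subgraph of which has average degree less than $k$. The inequality $k^{k/20-1/10}/5\ge k^{k/21}$ is equivalent to $k^{k/420-1/10}\ge 5$, which holds for $k$ sufficiently large. This yields $f_{\Ind}(k,k)\ge k^{k/21}$.

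The only real obstacle is locating an exponent for $p$ that keeps both nontrivial constraints satisfied while leaving $np$ as large as desired. The choice $p=k^{-1/10}$ sits well inside the feasible window; sharper choices (closer to $p=k^{-1/4}$) would produce better constants than $1/21$ in the exponent, but are not needed here.
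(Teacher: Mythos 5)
Your proof is correct and follows essentially the same approach as the paper: apply Theorem~\ref{lower bound conditions} with $s=k$, $n$ of order $k^{k/20}$, and $p$ a small negative power of $k$, then verify the three conditions and convert $(n-1)p/2$ into the claimed bound. The paper happens to use $p=k^{-1/5}$ rather than your $p=k^{-1/10}$, but both lie comfortably in the feasible window you identified and the verification is identical in spirit.
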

\begin{proof}
    Assuming $k$ is sufficiently large, we may apply Theorem~\ref{lower bound conditions} with $n = k^{k/20}$ and $p =k^{-1/5}$. 
    
    It is trivial to verify that the first and third terms in the statement of Theorem~\ref{lower bound conditions} are appropriately bounded, so we only discuss the second condition. Here (for $k\ge 10$), the crude bound $(1-p^4)^{(k^2-3k-2)/4}\le \exp(-p^4 (k^2-3k-2)/4)\le \exp(-k^{6/5}/8)$ gives us what we need (as $\log n\le k\log k\le k^{6/5}/8-1$ for large $k$).
\end{proof}
\begin{remark}
    It is not hard to prove that the probabilistic constructions in this section are almost-regular with high probability. Thus, Corollary~\ref{diagonal lower} (along with Corollary~\ref{fixed s lower}, detailed later below) tell us that Lemma~\ref{deletion} is essentially tight (up to the constant in our exponent) when we are looking for subgraphs with large average degree. This is rather surprising since in the non-induced setting one gets much better (i.e., polynomial) bounds when $G$ is almost-regular. 
\end{remark}

\begin{corollary}[Theorem~\ref{thm:lower} \ref{fixed k case}] \label{fixed k lower}
    Fix $k\ge 2$. We have that $f_{\Ind}(s,k)\ge s^{(1/4-o(1))(k^2-3k-2)}$.
\end{corollary}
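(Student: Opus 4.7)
The approach is to apply Theorem~\ref{lower bound conditions} with carefully chosen parameters $n$ and $p$, both functions of $s$, with $k$ (and hence $K := k^2 - 3k - 2$) fixed. We may assume $k \ge 4$, so $K \ge 2$: for $k \in \{2, 3\}$ we have $K \le 0$ and the claimed bound is vacuous. The goal is to produce a $K_{s,s}$-free graph with average degree of order $s^{K/4 - o(1)}$ in which every induced $C_4$-free subgraph has average degree less than $k$.

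The heart of the argument is balancing Conditions~1 and~2 of Theorem~\ref{lower bound conditions}. Condition~1 ($n^2 p^s \le 1/2$) favors small $p$, whereas Condition~2 ($n(1-p^4)^{K/4} \le 1/2$) demands that $p$ be close to $1$: since $K$ is a constant, $(1-p^4)^{K/4}$ only becomes polynomially small in $n$ when $1-p^4$ is very small. Equating the two constraints forces the scaling $1 - p^4 \asymp (\log s)/s$. A concrete clean choice is $\delta := 2K (\log s)/s$ together with
\[
p := (1-\delta)^{1/4}, \qquad n := \bigl\lfloor (s/(2K\log s))^{K/4}/2 \bigr\rfloor,
\]
so that Condition~2 is tight by construction. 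The estimate $p^s \le e^{-\delta s/4} = s^{-K/2}$ then yields $n^2 p^s = O\bigl((\log s)^{-K/2}\bigr)$, which is well below $1/2$ for large $s$, and Condition~3 is automatic once $n \to \infty$ and $p \to 1$.

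Since $\log n = (K/4)\log s - O(\log\log s)$, we get $n = s^{K/4 - o(1)}$. Theorem~\ref{lower bound conditions} then produces a graph whose average degree is $(n-1)p/2 = s^{K/4 - o(1)}$, witnessing $f_{\Ind}(s,k) \ge s^{(1/4 - o(1))(k^2-3k-2)}$ for $s$ large, as desired. The only technical hurdle is the parameter optimization: identifying the correct scaling $\delta \asymp (\log s)/s$, and pinning down the multiplicative constant $2K$ so that Conditions~1--3 all hold while $n$ is kept as large as possible; once this scaling is correct, verifying the three inequalities is routine asymptotic bookkeeping.
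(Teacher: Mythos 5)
Your proof is correct and takes essentially the same approach as the paper: apply Theorem~\ref{lower bound conditions} with $1-p^4 \asymp (\log s)/s$ and $n = s^{(1/4-o(1))(k^2-3k-2)}$, handling $k\in\{2,3\}$ trivially. The only cosmetic difference is in how the constants and the $o(1)$ are packaged (the paper takes $p = 1 - k^2\log s/s$ and $n = s^{(1/4-\epsilon)(k^2-3k-2)}$ for a slowly-decaying $\epsilon(s)$, whereas you choose $p = (1-\delta)^{1/4}$ and a floor for $n$ so that Condition~2 is tight by construction); the verification of the three conditions is the same routine computation.
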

\begin{proof} Note that this result is trivial for $k=2,3$, as then $k^2-3k-2<0$. So now assume $k\ge 4$.

We use Theorem~\ref{lower bound conditions} again, with $n = s^{(1/4-\epsilon)(k^2-3k-2)},p = 1-k^2\log s/s$ where $\epsilon = \epsilon(s)$ tends to zero as $s\to \infty$.
Crudely, we have that $n^2p^s \le \exp(\log s(k^2-3k-2)-k^2\log s)<1/2$. Meanwhile, we have that $(1-p^4)^{(k^2-3k-2)/4} \le (4k^2\log s/s)^{(k^2-3k-2)/4} \le s^{(o(1)-1/4)(k^2-3k-2)}<1/2n$ (assuming $\epsilon(s)$ does not tend to zero too fast). The third condition holds as $s\to\infty$, because $n\to \infty$ and $p\to 1$.
\end{proof}

Our final lower bound does not require Theorem~\ref{lower bound conditions}. Instead, we need a basic observation. Here we write $\alpha(G)$ for the maximum size of an independent set of a graph $G$.
\begin{proposition}
    Any $n$-vertex $C_4$-free graph $G$ has an independent set of size $\ge \frac{n}{3+\sqrt{n}}$.
\end{proposition}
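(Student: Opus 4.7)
The plan is to combine Reiman's extremal bound (Proposition~\ref{extremal number of c4}), which is already stated above, with the classical Tur\'an/Caro--Wei lower bound on the independence number in terms of the average degree.

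First, Proposition~\ref{extremal number of c4} gives $e(G)\le \tfrac{1}{2}n^{3/2}+n/4+1$, so the average degree satisfies
\[
\bar d(G)=\frac{2e(G)}{n}\le \sqrt{n}+\tfrac{1}{2}+\tfrac{2}{n}.
\]
In particular, for $n\ge 2$ (the cases $n=0,1$ being trivial) we have $\bar d(G)+1\le \sqrt{n}+3$.

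Next, I would apply the Caro--Wei inequality $\alpha(G)\ge \sum_{v}\tfrac{1}{d(v)+1}$, which by convexity of $x\mapsto 1/(x+1)$ and Jensen's inequality yields
\[
\alpha(G)\ge \frac{n}{\bar d(G)+1}\ge \frac{n}{\sqrt{n}+3}=\frac{n}{3+\sqrt n},
\]
as required. (Alternatively, one could avoid citing Caro--Wei and just run the greedy argument: iteratively pick a vertex of minimum degree, add it to the independent set, and delete it together with its neighbours; since the minimum degree of any subgraph is at most its average degree, and since the average-degree bound from Reiman's theorem is inherited by all induced subgraphs of $G$, this removes at most $\sqrt{|V|}+3$ vertices per iteration, giving the same conclusion.)

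There is no genuine obstacle here: the statement is an immediate corollary of Reiman's theorem combined with Tur\'an's bound, and the only thing to check is the elementary arithmetic estimate $\tfrac{1}{2}+\tfrac{2}{n}\le \tfrac{5}{2}$ for $n\ge 2$, which turns the precise bound $\sqrt{n}+\tfrac{1}{2}+\tfrac{2}{n}+1$ on $\bar d(G)+1$ into the cleaner $\sqrt n+3$.
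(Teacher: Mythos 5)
Your proof is correct and takes exactly the paper's route: apply Reiman's bound (Proposition~\ref{extremal number of c4}) to bound $d(G)$, then invoke the Tur\'an corollary $\alpha(G)\ge n/(1+d(G))$. One tiny slip in the closing parenthetical: the arithmetic check you need is $\tfrac12+\tfrac{2}{n}\le 2$ (not $\le \tfrac52$) for $n\ge2$, which is what gives $d(G)+1\le\sqrt n+3$; the bound you wrote is too weak to yield $\sqrt n + 3$, but the needed inequality holds trivially and the rest of your argument already states it correctly.
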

\begin{proof}
    This follows from Proposition~\ref{extremal number of c4}, and the fact that $\alpha(G)\ge \frac{|V(G)|}{1+d(G)}$ for any graph $G$ (a well-known corollary of Tur\'an's Theorem).
\end{proof}
Combined with Corollary~\ref{verts in C4free}, we get the following.
\begin{corollary}\label{independent set in C4free}
    Any $C_4$-free graph $G$ with average degree $d(G)\ge k\ge 1$ must have $\alpha(G)\ge \frac{(k-1)^2}{k+2}\ge k-4$.
\end{corollary}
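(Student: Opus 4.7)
The plan is to combine the two cited results directly. Fix a $C_4$-free graph $G$ with average degree $d(G) \geq k$, and set $n := |V(G)|$. By Corollary~\ref{verts in C4free}, we immediately have $n \geq (k-1)^2$. By the preceding proposition,
\[
\alpha(G) \geq \frac{n}{3+\sqrt{n}}.
\]

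Next I would check that $f(n) := n/(3+\sqrt{n})$ is non-decreasing on $[0,\infty)$; indeed, a short derivative computation gives $f'(n) = (3 + \tfrac{1}{2}\sqrt{n})/(3+\sqrt{n})^2 > 0$. Plugging in the lower bound $n \geq (k-1)^2$ (and using $\sqrt{(k-1)^2} = k-1$ for $k \geq 1$) then yields
\[
\alpha(G) \;\geq\; \frac{(k-1)^2}{3+(k-1)} \;=\; \frac{(k-1)^2}{k+2},
\]
which is the first claimed inequality.

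For the second inequality, it suffices to verify that $(k-1)^2 \geq (k-4)(k+2)$. Expanding both sides gives $k^2 - 2k + 1 \geq k^2 - 2k - 8$, i.e.\ $9 \geq 0$, so the inequality holds for all $k \geq 1$ (where $k+2 > 0$). This gives $(k-1)^2/(k+2) \geq k - 4$ and completes the proof.

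There is no real obstacle here: the statement is a two-line deduction that packages together the vertex count lower bound from Corollary~\ref{verts in C4free} with the Tur\'an-type independence number bound from the preceding proposition, together with a trivial algebraic simplification.
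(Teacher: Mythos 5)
Your proof is correct and matches the paper's (implicit) argument exactly: apply the Tur\'an-type proposition $\alpha(G)\ge n/(3+\sqrt{n})$, plug in $n\ge (k-1)^2$ from Corollary~\ref{verts in C4free} using monotonicity, and simplify. The paper leaves the monotonicity check and the final algebraic inequality tacit; you have supplied both, which is appropriate.
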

\begin{corollary}[Theorem~\ref{thm:lower} \ref{fixed s case}]\label{fixed s lower} Fix $s\ge 2$. Then $f_{\Ind}(s,k) \ge k^{(1/2-o(1))s-1}$.
    
\end{corollary}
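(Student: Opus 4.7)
The plan is to leverage Corollary~\ref{independent set in C4free}, which states that any $C_4$-free graph with average degree at least $k$ contains an independent set of size at least $k-4$. Since $\alpha(G')\le\alpha(G)$ whenever $G'$ is an induced subgraph of $G$, it suffices to exhibit a $K_{s,s}$-free graph $G$ with $\alpha(G)<k-4$ and average degree as large as possible: every induced $C_4$-free $G'\subseteq G$ will then satisfy $d(G')<k$ by the corollary, so that $d(G)$ is a valid lower bound on $f_{\Ind}(s,k)$.

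For the construction I would sample $G\sim G(n,p)$ with $p\coloneqq \tfrac12 n^{-2/s}$; this choice keeps the expected number of copies of $K_{s,s}$ at most $n^{2s}p^{s^2}=2^{-s^2}\le 1/2$ automatically. The parameter $n$ should be taken as large as possible subject to the constraint that the expected number of independent sets of size $k-4$, namely $\binom{n}{k-4}(1-p)^{\binom{k-4}{2}}$, is also at most $1/2$. Using $(1-p)^m\le e^{-pm}$ and $\binom{n}{m}\le n^m$, this reduces to requiring roughly $p\binom{k-4}{2}\gtrsim (k-4)\log n$, which is satisfied by $n\coloneqq \lfloor (k/(c_s\log k))^{s/2}\rfloor$ for an appropriate constant $c_s$ depending on $s$. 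A standard Chernoff bound guarantees $e(G)\ge \tfrac14 n^2p$ with probability greater than $1/2$, so a union bound produces a single realisation $G$ that is $K_{s,s}$-free, satisfies $\alpha(G)<k-4$, and has at least $\tfrac14 n^2 p$ edges.

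The resulting average degree is at least $\tfrac12(n-1)p\gtrsim n^{1-2/s}\gtrsim (k/\log k)^{s/2-1}=k^{s/2-1-o(1)}$ as $k\to\infty$ with $s$ fixed, matching the claimed bound $k^{(1/2-o(1))s-1}$. The only real subtlety is calibrating $n$ so that the independent-set constraint is tight at the right scale; this is precisely where Corollary~\ref{independent set in C4free} is more efficient than a direct application of Theorem~\ref{lower bound conditions}, since inserting $p\asymp n^{-2/s}$ into the condition $n(1-p^4)^{(k^2-3k-2)/4}\le 1/2$ in that theorem caps $n$ at roughly $k^{s/4}$ and hence yields only the weaker exponent $s/4-1/2$.
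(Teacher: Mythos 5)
Your proof is correct and follows essentially the same approach as the paper: both use Corollary~\ref{independent set in C4free} to reduce the problem to finding a $K_{s,s}$-free graph with independence number below $k-4$, then apply a first-moment argument to $G(n,p)$ together with a Chernoff bound on the edge count. The only difference is cosmetic parameterization (the paper takes $p = n^{-(2+\eps)/s}$ with $\eps\to 0$, while you take $p = \tfrac12 n^{-2/s}$ and absorb the slack into a $\log k$ factor in $n$), and your closing observation that Theorem~\ref{lower bound conditions} would only yield the weaker exponent $s/4 - 1/2$ is accurate and explains why the paper bypasses that theorem here.
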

\begin{proof}
    Let $k' = k-4$. We consider $G\sim G(n,p)$, where $n,p$ will be chosen later.    
    Let $X$ count the number of $k'$-subsets $S\in \binom{[n]}{k'}$ such that $G[S]$ is an independent set. By Corollary~\ref{independent set in C4free}, if $X=0$, then $G$ will not contain an induced $C_4$-free subgraph $G'$ with $d(G')\ge k$. We note that $\E[X] = \binom{n}{k'}(1-p)^{\binom{k'}{2}} \le (n(1-p)^{k/4})^{k'}$ (assuming $k\ge 10$).

    Let $Y$ count the number of copies of $K_{s,s}$ in $G$. We have $\E[Y]\le n^{2s}p^{s^2}\le (n^2p^s)^{s}$.

    Finally, let $\mathcal{E}$ be the event that $e(G)\le \binom{n}{2}p/2$. A Chernoff bound tells us $\P(\mathcal{E})<\exp(-\binom{n}{2}p/4)$.

    If $\E[X]+\E[Y]+\P(\mathcal{E})<1$, a union bound tells us that $f_{\Ind}(s,k)\ge \frac{(n-1)p}{2}$. Taking $p = n^{-(2+\eps)/s}$, $n=k^{\frac{1-\eps}{2+\eps}s}$ so that $p=k^{\eps-1}$, one can check that this holds for sufficiently large $k$.
\end{proof}

\begin{remark} To provide lower bounds for $f_{\Ind}(s,k)$, we found $K_{s,s}$-free graphs $G$ with the stronger property that every set of $(k-1)^2$ vertices contains a $C_4$. We note that one cannot hope to do better with such an approach (beyond improving the constants in the exponents).
Indeed, such $G$ must not contain a clique on $2s$ vertices, nor an independent set on $(k-1)^2$ vertices. Thus, the average degree of $G$ (which is at most $|V(G)|-1$), will be bounded by the off-diagonal Ramsey number $R(K_{2s},K_{k^2})$. A classical bound of Erd\H{o}s-Szekeres \cite{ES} tells us \[R(K_a,K_b)\le \binom{a+b-2}{b-1}\le (a+b)^{\min\{a,b\}},\] and in particular we get
\[R(K_{2k},K_{k^2}) \le k^{(4+o(1))k}\quad\textrm{and}\quad R(K_{2s},K_{k^2})\le \min\{s^{(1+o(1))k^2},k^{(4+o(1))s}\}.\]
\end{remark}

\section{Corollary on subdivisions}\label{finding subdivisions}

In this section we prove Corollary~\ref{induced subdivision}, which improves the bounds 
in the theorem of K\"uhn and Osthus ~\cite{KO2} (Theorem \ref{kotheorem}) from triply exponential to singly exponential.  We restate the result for convenience.

\indSub*

We require the following result, which was proved independently 
by Bollob\'as and Thomason~\cite{bollobas} and K\'omlos and Szemer\'edi~\cite{komlos}. 
\begin{theorem}\label{noninduced subdivision}
    There exists an absolute constant $C>0$ such that for all $k \in \N$, if $G$ is a graph with $d(G)\ge Ck^2$, then $G$ contains a (not necessarily induced) subdivision of $K_k$.
\end{theorem}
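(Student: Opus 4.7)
My plan is to follow the classical ``branch vertex plus linkage'' strategy due to Bollob\'as--Thomason and Koml\'os--Szemer\'edi. First, I would pass to a subgraph $H \subseteq G$ with $\delta(H) \ge d(G)/2 \ge Ck^2/2$, using the standard observation that iteratively deleting vertices of degree less than $d(G)/4$ preserves average degree $\ge d(G)/2$ and hence cannot exhaust the graph, producing a nonempty subgraph with the claimed minimum degree.

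Next, I would extract a highly connected subgraph $H' \subseteq H$. A Mader-type theorem guarantees that any graph with average degree at least $4t$ contains a subgraph of vertex-connectivity at least $t$; applied with $t$ a suitable constant multiple of $k^2$, this produces an $H'$ with $\kappa(H') \ge c \binom{k}{2}$ for a sufficiently large constant $c$. Then, by the linkage theorem of Bollob\'as--Thomason, any graph of connectivity at least a suitable constant times $\ell$ is $\ell$-\emph{linked} (meaning that for any $\ell$ disjoint pairs of vertices there exist internally vertex-disjoint paths joining them). Choosing constants appropriately, $H'$ is $\binom{k}{2}$-linked.

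With $H'$ in hand, I would build the subdivision as follows. Choose any $k$ vertices $v_1,\dots,v_k \in V(H')$ to serve as branch vertices. Since $\delta(H') = \Omega(k^2) \gg k$, for each ordered pair $(i,j)$ with $i \ne j$ I can pick a distinct neighbor $u_{i,j} \in N_{H'}(v_i) \setminus \{v_1,\dots,v_k\}$ so that all the $u_{i,j}$ are pairwise distinct (just greedily, using that each $v_i$ has $\Omega(k^2)$ neighbors). Applying $\binom{k}{2}$-linkedness inside $H' - \{v_1,\dots,v_k\}$ to the pairs $\{(u_{i,j}, u_{j,i}) : 1 \le i < j \le k\}$ yields internally vertex-disjoint paths $Q_{ij}$; then the paths $v_i\,u_{i,j}\,Q_{ij}\,u_{j,i}\,v_j$ are internally vertex-disjoint and form the desired subdivision of $K_k$.

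The main obstacle is the linkage theorem: proving that high connectivity forces linkedness is where the quadratic bound $Ck^2$ (rather than something linear in $k$) enters, since we need connectivity roughly $\binom{k}{2}$ to handle all pairs simultaneously. The standard proof by Bollob\'as--Thomason uses an iterative rerouting argument, while Koml\'os--Szemer\'edi obtain comparable bounds via sublinear expanders and exponentially-growing BFS balls. Either route requires a substantial technical lemma that I would take as the key black box; the rest of the argument is a clean ``pick branch vertices, separate out private neighbors, invoke linkedness'' assembly.
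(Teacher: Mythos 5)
The paper does not prove this theorem: it is stated as a classical black box, with the independent proofs of Bollob\'as--Thomason and Koml\'os--Szemer\'edi cited, so there is no ``paper's own proof'' to compare against. Your sketch is a faithful reconstruction of the Bollob\'as--Thomason route (clean to high minimum degree, apply Mader's theorem to get a highly connected subgraph, invoke a linkage theorem, then join branch vertices via private ``tag'' neighbours), and the constants you describe are consistent: Mader needs average degree $\Theta(\ell)$ to reach connectivity $\Theta(\ell)$, and you take $\ell = \Theta\bigl(\binom{k}{2}\bigr)$, which is where the $k^2$ comes from.

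Two points worth flagging. First, to avoid circularity the linkage black box must be the Bollob\'as--Thomason theorem that $22\ell$-connectivity forces $\ell$-linkedness, whose proof does not rely on the existence of topological cliques; the older Larman--Mani/Jung criterion (connectivity $2\ell$ plus a $TK_{3\ell}$ subgraph) would require precisely the theorem you are trying to prove. Second, you apply $\binom{k}{2}$-linkedness inside $H' - \{v_1,\dots,v_k\}$, but linkedness was established for $H'$; this is fine because $\kappa(H' - \{v_1,\dots,v_k\}) \ge \kappa(H') - k$, still $\Omega(k^2)$, so with a slightly larger constant the deletion is harmless. Two minor nits: iteratively deleting vertices of degree below $d(G)/4$ terminates with minimum degree at least $d(G)/4$, not $d(G)/2$; and the Koml\'os--Szemer\'edi sublinear-expander argument proves the subdivision theorem directly, not by way of a linkage theorem, so the last sentence of your proposal slightly misattributes.
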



We can now proceed with the proof of the corollary.

\begin{proof}[Proof of Corollary~\ref{induced subdivision}]
Let $G$ be a $K_{s,s}$-free graph with $d(G)\ge k^{Cs^3}$ (where $C$ is a large constant). Applying Theorem~\ref{thm:technical} with $\delta = 1/100$, we can find an induced $\{C_3,C_4\}$-free subgraph $G'$ with average degree $C'k^5$ (with $C' =\Omega(C)$), where either $G'$ is bipartite or $d(G')\ge \Delta(G')^{1-1/100}$. We handle these in separate cases.

\textbf{Case 1} {\em $G'$ is bipartite.} We pass to an induced subgraph $H\subset G'$ with maximum average degree, say $\dD$.  Then $\dD\geq C'k^2$ and $H$ is $\dD$-degenerate with $\delta(H)\ge \dD/2$. Let $H=(A,B)$ where $|A'|\geq |B'|$. Let $A'=\{ x\in A: d_H(x)\geq 4\dD\}$. 

We have $|A'|\leq |A|/2 $ (by counting edges), and by maximality of average degree, we see that $e(H[A',B])\le (3/4)e(H)$. 
    
    Now, let $A^* = A\setminus A'$ and $F=G[A^*, B]$. We note $e(F)\ge e(H)/4$ and $|A^*|\ge |B|/2$. Furthermore, as $\delta(H)\ge d/2$, we have $d_F(a) = d_{H}(a)\in [\dD/2,4\dD]$ for all $a\in A^*$ 
    
    Let $W$ be a random subset of $B$, where each vertex is included independently in $W$ with probability $p=1/8\dD$. Let $U = \{x\in A^*:|N_F(x)\cap W| =2\}$. For $x\in A^*$, we have
    \begin{align*}
        \mathbb{P}[x\in U] = p^2(1-p)^{d_F(x)-2} \binom{d_F(x)}{2}
        \ge p^2(1-p)^{4\dD}\binom{\dD/2}{2}
        \ge p^2\dD^2/20\ge 1/2000.
    \end{align*}

    Then $\mathbb{E}[|U|-\dD|W|/1000]\geq |A^*|/2000-|B|/8000> 0$ (as $|A^*|\ge |B|/2$) and hence there is a choice of $W$ such that $|U|> \dD|W|/1000$. Fix such a $W$ and define an auxiliary graph $J$ with vertex set $W$ and $E(J) = \{N(z):z\in U\}$. 
    
    Note that $e(J) = |U|$, as $F$ was $C_4$-free (and so the edges coming from different $z$ are distinct). Thus, we have $d(J)= C''k^2$ for some $C'' \ge C'/1000$. Taking $C$ (and thus $C''$) sufficiently large, it follows that $J$ must contain a subdivision of a $K_{k}$ by Theoren~\ref{noninduced subdivision}.  By construction, this corresponds to an induced subdivision of $K_k$ in $G$.  

    \textbf{Case 2} {\em $d(G')\ge \Delta(G')^{1-1/100}$.}  As in the first case, it is enough to find $U,W\subset V(G')$ such that:
    \begin{itemize}
        \item $U$ and $W$ are independent sets; 
        \item $d_{G'}(u) = 2$ for each $u\in U$; and
        \item $|U|\ge C''k^2|W|$ for some appropriately large constant $C''>0$.
    \end{itemize} Theorem~\ref{noninduced subdivision} then gives our induced subdivision of $K_k$, as desired.

    We first pass to a subgraph $H\subset G'$ of maximal average degree.  Then $\dD:=d(H)\ge \max\{C'k^5,\Delta(H)^{1-1/100}\}$ and also $\delta(H)\ge \dD/2$.  Let $W_0$ be a random subset of $V(H)$ where each vertex $v\in V(H)$ is included in $W_0$ with probability $p =1/10\dD^{8/5}$. Let $W\subset W_0$ be the set $\{w\in W_0:|N_H(w)\cap W_0|=0\}$. Finally define $U_0:= \{u\in V(H):|N_H(u)\cap W|=|N_H(u)\cap W_0|=2\}$ and let $U\subset U_0\setminus W_0$ be the set of $u\in U_0$ with $u\not\in W_0$ and $|N_H(u)\cap U_0| = 0$. 
    
     So for $x\in V(G)$, 
     a union bound tells us
    \[\P(x\in U) \ge \P(x\not\in W_0)\P(x\in U_0|x\not\in W_0)\left(1-\sum_{y\in N_H(x)}\P(y \in U_0|x\in U_0)\right),\]
    where we have used the fact that if $x\in U_0$ then $x\not\in W_0$.
    
    Now $\P(x\not\in W_0) = 1-p\ge 1/2$, and
    \[\P(x\in U_0|x\not\in W_0) = \sum_{\{y,y'\}\in \binom{N_H(x)}{2}}\P(W_0\cap N_H(x) = W\cap N_H(x) = \{y,y'\}).\]Now, for each $\{y,y'\}\in \binom{N_H(x)}{2}$, we have $y,y'$ are not adjacent because $H\subset G'$ is $C_3$-free (and $x\in N_H(y)\cap N_H(y')$), thus the corresponding summand is at least
    \[(1-p)^{d_H(x)-2}p^2 (1-\E[|W_0\cap N_H(y)|+|W_0\cap N_H(y')|])\ge p^2(1-p\Delta(H))(1-2p\Delta(H))\ge p^2/2.\]Consequently, $\P(x\in U_0)\ge \binom{d_H(x)}{2} p^2/2\ge \dD^2p^2/10$ (since $\delta(H)\ge \dD/2$).

    Finally, for $y\in N_H(x)$, we have $N_H(y)\cap N_H(x) = \emptyset$ because $H\subset G'$ is $C_3$-free, whence
    \begin{align*}
        \P(y\in U_0|x\in U_0\setminus W_0) &= \P(y\in U_0|x\not \in W_0)\\
        &\le  p^2\binom{d_H(y)-1}{2}\\
        &\le (p\Delta(H))^2.\\
    \end{align*}

    Putting these together, we get
    \[\P(x\in U)\ge \frac{p^2\dD^2}{10}(1/2)(1-\Delta(H)(p\Delta(H))^2) \ge \frac{p^2\dD^2}{40}\](where in the last step we use $2(1-1/100)8/5>3$ to deduce $p^2\Delta(H)^3\le 1/2$). 
    
    Consequently, $\E[|U|]\ge \frac{p\dD^2}{40}\E[|W_0|] \ge \frac{\dD^{2/5}}{40}\E[|W|]$. Thus, there is some choice of $W_0$ such that $|U|\ge C''k^2|W|$ (where $C'' = \Omega(C'^{2/5})$).  Taking $C$ (and thus $C''$) sufficiently large, we are done.
\end{proof}
\section{Concluding remarks}\label{sec:conclude}
    \begin{itemize}
        \item It is still very interesting to improve the bounds in the non-induced case. Let $f(k,6)$ denotes the least integer $\dD$ such that if $G$ is a graph with $d(G)\ge \dD$, then $G$ contains a (not-necessarily induced) $C_4$-free subgraph $G'\subset G$ with $d(G)\ge k$.

        The best known bounds are  
        \[k^{3-o(1)}\le f(k,6)\le k^{O(k^2)},\]which were both established in \cite{MPS}. It would be very interesting if one could show that a polynomial upper bound held (i.e., that $f(k,6)\le k^{O(1)}$).
    
        \item We believe that for every $k$, there is a polynomial $p_k(s)$ so that every $K_{s,s}$-free graph $G$ with $d(G)\geq p_k(s)$ contains an induced $C_4$-free subgraph with average degree at least $k$. We could not verify this conjecture, but if true this would show that every {degree-bounded} hereditary family of graphs is polynomially bounded. We note that recently Bria\'nski, Davies, and Walczak~\cite{non-polychi} showed that there are $\chi$-bounded hereditary families of graphs whose chromatic number can grow arbitrarily fast compared with the clique number.

        \item An interesting difference between the two problems above is that, in the induced case, things are still difficult when $G$ is almost-regular. Let $f'_{\Ind}(s,k)$ be the smallest integer $d_0$ such that every $K_{s,s}$-free graph $G$ so that $d(G)/2 \le \delta(G) \le \Delta(G) \leq 2d(G)$
        contains an induced $C_4$-free subgraph $G'\subset G$ with $d(G')\ge k$.

        Applying Lemma~\ref{deletion}, we see that $f_{\Ind}'(s,k)\le k^{Cs}$ for some absolute constant $C$. It would already be nice to get a polynomial bound here (i.e., prove $f_{\Ind}'(s,k)= s^{O_k(1)}$).

        \item Finally, it would be very nice to better understand the behaviour of the function $F(\ell,k)$ introduced in Section~\ref{hyperproblem}. Already for $k=3$ and large $\ell$, we only know
        \[\ell^{2-o(1)}\le F(\ell,3) \le O(2^\ell),\]and improving either of these bounds would be quite interesting. We tentatively expect a polynomial upper bound when $\ell = O(1)$. 
    \end{itemize}

\bibliographystyle{abbrv}
\bibliography{Inducedfourcycle}

\end{document}